\documentclass[a4paper, 11pt]{amsart}

\setlength{\textheight}{8.75in}
\setlength{\textwidth}{6.5in}
\setlength{\topmargin}{0.0in}
\hoffset=-0.75 in

\usepackage{graphics, graphicx, multicol}
\usepackage{color}
\usepackage{amssymb,amsbsy,amsmath,amsfonts,amssymb,amscd}
\usepackage{latexsym,euscript,epsfig}
\usepackage{pstricks}
\usepackage{array,delarray}
\usepackage{pst-node}
\usepackage{pspicture}
\usepackage[matrix,ps,xdvi]{xypic}

\usepackage{algorithm}
\usepackage{algorithmic}

\newtheorem{theorem}{Theorem}[section]

\newtheorem{proposition}{Proposition}[section]
\newtheorem{lemma}[theorem]{Lemma}
\newtheorem{definition}{Definition}[section]
\newtheorem{example}[theorem]{Example}

\newtheorem{remark}{Remark}[section]

\numberwithin{equation}{section}

   \newcommand{\Sn}{\mathfrak{S}_n}

    \newcommand{\knuthbtree}{\mathbb{P}}
    
    \newcommand{\knuthitree}{\mathbb{Q}}
    \newcommand{\fominPhypo}{\widehat{P}}
    \newcommand{\fominQhypo}{\widehat{Q}}
    \newcommand{\fominPSylv}{\widehat{\knuthbtree}}
    \newcommand{\fominQSylv}{\widehat{\knuthitree}}

    \def\to{\rightarrow}
    
    \def\egaldef{\stackrel{def}{=\,}}

    \def\permOneSymb{{\bf x}}
    \def\growthdiagramOfSigma{d(\sigma)}


\newcommand{\btone}{\bullet}
\newcommand{\btl}{{\begin{picture}(2,3)\put(1,1){\circle*{0.7}}\put(2,2){\circle*{0.7}}\put(2,2){\Line(-1,-1)}\put(2,2){\circle*{1}}\end{picture}}}
\newcommand{\btr}{{\begin{picture}(2,3)\put(1,2){\circle*{0.7}}\put(2,1){\circle*{0.7}}\put(1,2){\Line(1,-1)}\put(1,2){\circle*{1}}\end{picture}}}
\newcommand{\btll}{{\begin{picture}(3,4)\put(1,1){\circle*{0.7}}\put(2,2){\circle*{0.7}}\put(2,2){\Line(-1,-1)}\put(3,3){\circle*{0.7}}\put(3,3){\Line(-1,-1)}\put(3,3){\circle*{1}}\end{picture}}}
\newcommand{\btlr}{{\begin{picture}(3,4)\put(1,2){\circle*{0.7}}\put(2,1){\circle*{0.7}}\put(1,2){\Line(1,-1)}\put(3,3){\circle*{0.7}}\put(3,3){\Line(-2,-1)}\put(3,3){\circle*{1}}\end{picture}}}
\newcommand{\btlandr}{{\begin{picture}(3,3)\put(1,1){\circle*{0.7}}\put(2,2){\circle*{0.7}}\put(3,1){\circle*{0.7}}\put(2,2){\Line(-1,-1)}\put(2,2){\Line(1,-1)}\put(2,2){\circle*{1}}\end{picture}}}
\newcommand{\btrl}{{\begin{picture}(3,4)\put(1,3){\circle*{0.7}}\put(2,1){\circle*{0.7}}\put(3,2){\circle*{0.7}}\put(3,2){\Line(-1,-1)}\put(1,3){\Line(2,-1)}\put(1,3){\circle*{1}}\end{picture}}}
\newcommand{\btrr}{{\begin{picture}(3,4)\put(1,3){\circle*{0.7}}\put(2,2){\circle*{0.7}}\put(3,1){\circle*{0.7}}\put(2,2){\Line(1,-1)}\put(1,3){\Line(1,-1)}\put(1,3){\circle*{1}}\end{picture}}}
\newcommand{\btlll}{{\begin{picture}(4,5)\put(1,1){\circle*{0.7}}\put(2,2){\circle*{0.7}}\put(2,2){\Line(-1,-1)}\put(3,3){\circle*{0.7}}\put(3,3){\Line(-1,-1)}\put(4,4){\circle*{0.7}}\put(4,4){\Line(-1,-1)}\put(4,4){\circle*{1}}\end{picture}}}
\newcommand{\btllr}{{\begin{picture}(4,5)\put(1,2){\circle*{0.7}}\put(2,1){\circle*{0.7}}\put(1,2){\Line(1,-1)}\put(3,3){\circle*{0.7}}\put(3,3){\Line(-2,-1)}\put(4,4){\circle*{0.7}}\put(4,4){\Line(-1,-1)}\put(4,4){\circle*{1}}\end{picture}}}
\newcommand{\btlrl}{{\begin{picture}(4,5)\put(1,3){\circle*{0.7}}\put(2,1){\circle*{0.7}}\put(3,2){\circle*{0.7}}\put(3,2){\Line(-1,-1)}\put(1,3){\Line(2,-1)}\put(4,4){\circle*{0.7}}\put(4,4){\Line(-3,-1)}\put(4,4){\circle*{1}}\end{picture}}}
\newcommand{\btrll}{{\begin{picture}(4,5)\put(1,4){\circle*{0.7}}\put(2,1){\circle*{0.7}}\put(3,2){\circle*{0.7}}\put(3,2){\Line(-1,-1)}\put(4,3){\circle*{0.7}}\put(4,3){\Line(-1,-1)}\put(1,4){\Line(3,-1)}\put(1,4){\circle*{1}}\end{picture}}}
\newcommand{\btrlr}{{\begin{picture}(4,5)\put(1,4){\circle*{0.7}}\put(2,2){\circle*{0.7}}\put(3,1){\circle*{0.7}}\put(2,2){\Line(1,-1)}\put(4,3){\circle*{0.7}}\put(4,3){\Line(-2,-1)}\put(1,4){\Line(3,-1)}\put(1,4){\circle*{1}}\end{picture}}}
\newcommand{\btlandrr}{{\begin{picture}(4,4)\put(1,2){\circle*{0.7}}\put(2,3){\circle*{0.7}}\put(3,2){\circle*{0.7}}\put(4,1){\circle*{0.7}}\put(3,2){\Line(1,-1)}\put(2,3){\Line(-1,-1)}\put(2,3){\Line(1,-1)}\put(2,3){\circle*{1}}\end{picture}}}
\newcommand{\btrrl}{{\begin{picture}(4,5)\put(1,4){\circle*{0.7}}\put(2,3){\circle*{0.7}}\put(3,1){\circle*{0.7}}\put(4,2){\circle*{0.7}}\put(4,2){\Line(-1,-1)}\put(2,3){\Line(2,-1)}\put(1,4){\Line(1,-1)}\put(1,4){\circle*{1}}\end{picture}}}
\newcommand{\btrrr}{{\begin{picture}(4,5)\put(1,4){\circle*{0.7}}\put(2,3){\circle*{0.7}}\put(3,2){\circle*{0.7}}\put(4,1){\circle*{0.7}}\put(3,2){\Line(1,-1)}\put(2,3){\Line(1,-1)}\put(1,4){\Line(1,-1)}\put(1,4){\circle*{1}}\end{picture}}}
\newcommand{\btltolandr}{{\begin{picture}(4,5)\put(1,2){\circle*{0.7}}\put(2,3){\circle*{0.7}}\put(3,2){\circle*{0.7}}\put(2,3){\Line(-1,-1)}\put(2,3){\Line(1,-1)}\put(4,4){\circle*{0.7}}\put(4,4){\Line(-2,-1)}\put(4,4){\circle*{1}}\end{picture}}}
\newcommand{\btlandrl}{{\begin{picture}(4,4)\put(1,2){\circle*{0.7}}\put(2,3){\circle*{0.7}}\put(3,1){\circle*{0.7}}\put(4,2){\circle*{0.7}}\put(4,2){\Line(-1,-1)}\put(2,3){\Line(-1,-1)}\put(2,3){\Line(2,-1)}\put(2,3){\circle*{1}}\end{picture}}}
\newcommand{\btrtolandr}{{\begin{picture}(4,5)\put(1,4){\circle*{0.7}}\put(2,2){\circle*{0.7}}\put(3,3){\circle*{0.7}}\put(4,2){\circle*{0.7}}\put(3,3){\Line(-1,-1)}\put(3,3){\Line(1,-1)}\put(1,4){\Line(2,-1)}\put(1,4){\circle*{1}}\end{picture}}}
\newcommand{\btllandr}{{\begin{picture}(4,4)\put(1,1){\circle*{0.7}}\put(2,2){\circle*{0.7}}\put(2,2){\Line(-1,-1)}\put(3,3){\circle*{0.7}}\put(4,2){\circle*{0.7}}\put(3,3){\Line(-1,-1)}\put(3,3){\Line(1,-1)}\put(3,3){\circle*{1}}\end{picture}}}
\newcommand{\btlrr}{{\begin{picture}(4,5)\put(1,3){\circle*{0.7}}\put(2,2){\circle*{0.7}}\put(3,1){\circle*{0.7}}\put(2,2){\Line(1,-1)}\put(1,3){\Line(1,-1)}\put(4,4){\circle*{0.7}}\put(4,4){\Line(-3,-1)}\put(4,4){\circle*{1}}\end{picture}}}
\newcommand{\btlrandr}{{\begin{picture}(4,4)\put(1,2){\circle*{0.7}}\put(2,1){\circle*{0.7}}\put(1,2){\Line(1,-1)}\put(3,3){\circle*{0.7}}\put(4,2){\circle*{0.7}}\put(3,3){\Line(-2,-1)}\put(3,3){\Line(1,-1)}\put(3,3){\circle*{1}}\end{picture}}}

\newcommand{\btlrandrl}{ {\begin{picture}(5,4) \put(1,2){\circle*{0.7}}
\put(2,1){\circle*{0.7}} \put(1,2){\Line(1,-1)}
\put(3,3){\circle*{0.7}} \put(4,1){\circle*{0.7}}
\put(5,2){\circle*{0.7}} \put(5,2){\Line(-1,-1)}
\put(3,3){\Line(-2,-1)} \put(3,3){\Line(2,-1)}
\put(3,3){\circle*{1}} \end{picture}}}

\newcommand{\btlrandrtolandr}{ {\begin{picture}(6,5) \put(1,3){\circle*{0.7}}
\put(2,2){\circle*{0.7}} \put(1,3){\Line(1,-1)}
\put(3,4){\circle*{0.7}} \put(4,2){\circle*{0.7}}
\put(5,3){\circle*{0.7}} \put(6,2){\circle*{0.7}}
\put(5,3){\Line(-1,-1)} \put(5,3){\Line(1,-1)}
\put(3,4){\Line(-2,-1)} \put(3,4){\Line(2,-1)}
\put(3,4){\circle*{1}} \end{picture}}}



\def\smallCompoTwoOneThree{
\begin{psmatrix}[colsep=\colSep,rowsep=\rowSep]
\square & \square\\
& \square\\
& \square & \square & \square
\end{psmatrix} }

\def\smallCompoTwoThree{
\begin{psmatrix}[colsep=\colSep,rowsep=\rowSep]
\square & \square\\
& \square & \square & \square
\end{psmatrix} }

\def\smallCompoTwoOneTwo{
\begin{psmatrix}[colsep=\colSep,rowsep=\rowSep]
\square & \square\\
& \square\\
& \square & \square
\end{psmatrix} }

\def\smallCompoTwoTwo{
\begin{psmatrix}[colsep=\colSep,rowsep=\rowSep]
\square & \square\\
& \square & \square
\end{psmatrix} }

\def\smallCompoTwoOneOne{
\begin{psmatrix}[colsep=\colSep,rowsep=\rowSep]
\square & \square\\
& \square\\
& \square
\end{psmatrix} }


\def\smallCompoTwoOne{
\begin{psmatrix}[colsep=\colSep,rowsep=\rowSep]
\square & \square\\
& \square
\end{psmatrix} }

\def\smallCompoOneTwo{
\begin{psmatrix}[colsep=\colSep,rowsep=\rowSep]
\square\\
\square & \square
\end{psmatrix} }


\def\smallCompoTwo{
\begin{psmatrix}[colsep=\colSep,rowsep=\rowSep] \square & \square\end{psmatrix} }

\def\smallCompoOneOne{
\begin{psmatrix}[colsep=\colSep,rowsep=\rowSep]
\square\\
\square
\end{psmatrix} }

\def\smallCompoOne{
\begin{psmatrix}[colsep=\colSep,rowsep=\rowSep] \square\end{psmatrix} }


\def\smallEmptyCompo{
\begin{psmatrix}[colsep=\colSep,rowsep=\rowSep] \emptyset\end{psmatrix} }

\def\GrTeXBox#1{\vbox{\vskip\vcadre\hbox{\hskip\hcadre%
      $#1$%
   \hskip\hcadre}\vskip\vcadre}}
\def\arx#1[#2]{\ifcase#1 \relax \or%
  \ar @{-}[#2]  \or%
  \ar @2{-}[#2] \or%
  \ar @{--}[#2] \or%
  \ar @2{.}[#2] \or%
  \ar @{~}[#2]  \fi}

    \makeatletter
    \def\MyTextBox{\pst@object{MyTextBox}}
    \def\MyTextBox@i#1#2#3{{%
    \use@par                        
    \if@star\solid@star\fi          
    \pssetlength{\pst@dimc}{-\psframesep}
    \psaddtolength{\pst@dimc}{-\psframesep}
    \psaddtolength{\pst@dimc}{-\pslinewidth}
    \psaddtolength{\pst@dimc}{-\pslinewidth}
    \pssetlength{\pst@dima}{#1}
    \psaddtolength{\pst@dima}{\pst@dimc}
    \pssetlength{\pst@dimb}{#2}
    \psaddtolength{\pst@dimb}{\pst@dimc}
    \setbox\z@\vbox to\pst@dimb{\hsize\pst@dima\sloppy\vfil\small{#3}\vfil}%
    \psframebox{\box\z@}%
    }}\makeatother

    \def\Affect{:=}

\def\Tabvrule{\vrule width-0.3pt}       
\def\Tabhrule{\hrule \hrule height-0.4pt} 
\def\Tabstrut{\vrule height2.2ex 
                     depth0.8ex  
                     width0ex    
\relax}

\def\PasCase#1{\omit%
            $\vcenter{\hbox {\vbox to 0.4pt{}}
               \hbox{\makebox[2ex]{\Tabstrut$#1$}}}%
               \Tabvrule$}
\def\PasCasePoint{\PasCase{\cdot}}
\def\DessinCarre#1{%
    \vcenter{\hbox{}\hrule
             \hbox{\vrule\makebox[3ex]{\Tabstrut$#1$}\vrule}\Tabhrule}%
             \Tabvrule}
\def\GenRuban#1{\vcenter{\halign{&$\DessinCarre{##}$\cr#1}}\egroup}

\def\sTabvrule{\vrule width-0.4pt}
\def\sTabhrule{\hrule \hrule height-0.4pt}
\def\sTabstrut{\vrule height1.2ex depth0.6ex width0ex \relax}
\def\sDessinCarre#1{%
    \vcenter{\hbox{}\hrule
             \hbox{\vrule\makebox[2.3ex]%
                  {\sTabstrut$\scriptstyle#1$}\vrule}\sTabhrule}%
             \sTabvrule}
\def\sGenRuban#1{\vcenter{\halign{&$\sDessinCarre{##}$\cr#1}}\egroup}

\def\ruban{%
  \bgroup
  \let\ =\omit
  \let\\=\cr
  \let\.=\PasCasePoint
  \offinterlineskip
  \GenRuban}
\begin{document}

\title[Fomin's approach for RSK applied to the BST and the Hypoplactic insertions]
{Binary Search Tree insertion, the Hypoplactic insertion,\\and Dual
Graded Graphs}
\author[J. Nzeutchap]{Janvier Nzeutchap}

\address{LITIS EA 4051 (Laboratoire d'Informatique, de Traitement de l'Information et des Syst\`emes)\\
Avenue de l'Universit\'e, 76800 Saint Etienne du Rouvray, France}
\email{janvier.nzeutchap@univ-mlv.fr}
\urladdr{http://monge.univ-mlv.fr$/^{\sim}$nzeutcha}

\subjclass[2000]{Primary 05-06; Secondary 05E99}

\keywords{Graded graphs, Robinson-Schensted, Fomin's approach,
hypoplactic, sylvester, binary search tree.}

\begin{abstract}
Fomin (1994) introduced a notion of duality between two graded
graphs on the same set of vertices. He also introduced a
generalization to dual graded graphs of the classical
Robinson-Schensted-Knuth algorithm. We show how Fomin's approach
applies to the binary search tree insertion algorithm also known as
sylvester insertion, and to the hypoplactic insertion algorithm.
\end{abstract}

\maketitle

\vspace{-1cm}

\tableofcontents

\vspace{-1.5cm}

\section{Introduction and definitions}

The Young lattice is defined on the set of partitions of positive
integers, with covering relations given by the natural inclusion
order. The differential poset nature of this graph was generalized
by Fomin with the introduction of graph duality
\cite{graph_duality}. With this extension he introduced
\cite{schensted_for_dual_graphs} a generalization of the classical
Robinson-Schensted-Knuth \cite{schensted_subsequence,
knuth_gen_young_tableaux} algorithm, giving a general scheme for
establishing bijective correspondences between pairs of saturated
chains in dual graded graphs, both starting at a vertex of rank 0
and having a common end point of rank $n$, on the one hand, and
permutations of the symmetric group $\Sn$ on the other hand. This
approach naturally leads to the Robinson-Schensted insertion
algorithm.

Roby \cite{t_roby_thesis} gave an insertion algorithm, analogous to
the Schencted correspondence, for mapping a permutation to a pair of
Young-Fibonacci tableaux, interpreted as saturated chains in the
Fibonacci lattice $Z(1)$ introduced by Stanley
\cite{standley_differential} and also by Fomin \cite{rskGen}, and he
showed that Fomin's approach is partially equivalent to his
construction. He also \cite{roby_oscillating} made a connection
between graded graphs and the Robinson-Schensted correspondence for
skew oscillating tableaux. More recently, Cameron and Killpatrick
\cite{domino_fibo} gave an insertion algorithm, analogous to the
Schensted correspondence, for mapping a colored permutation to a
pair of domino Fibonacci tableaux, interpreted as saturated chains
in the Fibonacci lattice $Z(2)$, and they also showed that Fomin's
approach is partially equivalent to their construction. For both
constructions in $Z(1)$ and $Z(2)$, an evacuation is needed to make
the insertion algorithm coincide with Fomin's approach
\cite{evacuation_killpatrick, domino_fibo}.

The motivation of this note is to do the same for two other
combinatorial insertion algorithms, namely the binary search tree
insertion algorithm of Knuth \cite{knuth_comp_programming} also
known as the sylvester insertion as defined by Hivert et al
\cite{pbt}, and the hypoplactic insertion algorithm of Krob and
Thibon \cite{ncsf4}. In section \ref{section::duality} we recall the
necessary background and definitions on graph duality ; the reader
should refer to \cite{graph_duality} for more details on the
subject. In section \ref{section::hypo} we first recall the
hypoplactic insertion algorithm, then we build isomorphic images of
two dual graded graphs introduced by Fomin, and define a growth
function or $1$-correspondence into those graphs. We end the section
with the application of Fomin's approach using that correspondence
into the two graphs, and we relate the pairs of tableaux obtained
from the hypoplactic insertion algorithm to the pairs of tableaux
obtained from Fomin's growth diagrams. In section
\ref{section::sylv} we perform a similar process for the binary
search tree insertion (BST). The results presented in this note were
first announced in \cite{nzeutchap_fpsac06}.

\subsection{Definitions}\label{section::duality}

\begin{definition}[graded graph]A graded graph is a triple $G = (P, \rho, E)$ where
$P$ is a discrete set of vertices, $\rho : P \to \mathbb Z$ is a
rank function and $E$ is a multi-set of edges $(x, y)$ satisfying
$\rho\,(y) = \rho\,(x) + 1$.\end{definition}%
Let $G = (G_1, G_2) = (P, \rho, E_1, E_2)$ be a pair of graded
graphs with a common set of vertices and a common rank function,
where $E_1$ is the set of $G_1$-edges, directed upwards (in the
direction of increasing rank) and $E_2$ the set of $G_2$-edges,
directed downwards (in the direction of decreasing rank).\\

Let $\mathbb K$ be a field of characteristic zero, define $\mathbb
KP$ as the vector space formed by linear combinations of vertices of
$P$. One can now define two linear operators $U$ (Up) and $D$ (Down)
acting on $\mathbb KP$ as follows. \begin{equation} U\,x \,\, = \,\,
\sum_{(x, y)\,\, \in \,\, E_1} w_1(x, y) \,\, y \qquad ; \qquad D\,y
\,\, = \,\, \sum_{(x,y)\,\, \in \,\, E_2} w_2(x, y) \,\, x
\end{equation} \noindent where $w_i(x, y)$ is the multiplicity or
the weight of the edge $(x, y)$ in $E_i$.

\begin{definition}[graph duality]$G_1$
and $G_2$ are said to be dual \cite{graph_duality} if the two
operators $U$ and $D$ satisfy the commutation relation below.
\begin{equation}\label{equa::duality}D_{n+1}\,U_n \,\, = \,\, U_{n-1}\,D_n +
I_n\end{equation} where $U_n$ (resp. $D_n$) denotes the restriction
of the operator $U$ (resp. $D$) to the $n^{th}$ level of the graph,
and $I_n$ the identical operator at the same level. There are
generalizations of this definition, notably the case of an
$r$-duality with $r > 1$ and the case of an $r_n$-duality, with the
relations bellow.
\begin{equation}\label{equa::dualityGen}D_{n+1}\,U_n \,\, = \,\, U_{n-1}\,D_n +
r\,I_n \quad \mbox{and} \quad D_{n+1}\,U_n \,\, = \,\, U_{n-1}\,D_n
+ r_n\,I_n\end{equation}\end{definition}

A well-known example is the Young lattice of partitions of integers,
which is a \emph{self-dual} graded graph ($G_1 = G_2$) or
\emph{differential poset} \cite{standley_differential}. Its
self-duality expresses the fact that for any Ferrers diagram
$\lambda$, there is one more Ferrers diagram obtained by adding a
single box to $\lambda$ than by deleting a single box from
$\lambda$, and for any couple of Ferrers diagram $(\lambda, \mu)$
there are as many Ferrers diagram simultaneously contained by
$\lambda$ and $\mu$ than those simultaneously containing $\lambda$
and $\mu$.

\begin{definition}[$1$-correspondence]Introduced by Fomin \cite{graph_duality}, it
denotes any bijective map $\phi$ in a self-dual graded graph $G =
(P, \rho, E)$, sending any pair $(b_1,b_2)$ of edges having a common
end point, to a triple $(a_1, a_2, \alpha)$ where $a_1$ and $b_1$
are two edges of the lattice having a common start point, $\alpha$
is either $0$ or $1$, and the following properties are satisfied,
where $a_1$ is the edge $(t,x)$, $a_2 \egaldef (t,y)$, $b_1 \egaldef
(y,z)$ and $b_2 \egaldef (x,z)$.
\begin{enumerate}
  \item $end(a_1) = start(b_2)$ and $end(a_2) = start(b_1)$ ;
  \item if $b_1$ and $b_2$ are degenerated,
  that is to say there exists a vertex $x_0 \in P$ such that $b_1 = b_2 = (x_0,x_0)$,
  then $(a_1, a_2, \alpha) = (b_1,b_2,0)$.
\end{enumerate} \vspace{-0.25cm}
{\small
$$\begin{psmatrix}[colsep=0.6,rowsep=0.25]
 && _{b_2} && \\[0pt]
& [name=x]x && [name=z]\fbox{z} \\[0pt]
^{a_1} && \alpha && _{b_1} \\[0pt]
& [name=t]t && [name=y]y \\[0pt]
 && ^{a_2} &&
  \psset{nodesep=5pt,arrows=->}
  \ncline{t}{x} \ncline{x}{z}
  \ncline{t}{y} \ncline{y}{z}
\end{psmatrix}$$}
\vspace{-1cm}
\begin{figure}[h]
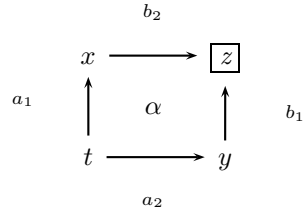
\label{label::square::growth::diagram}
\caption{A square in a growth diagram.}
\end{figure}
\end{definition}

A $1$-correspondence can be used to build a \emph{growth diagram},
which is a sort of pictorial representation of a family of
correspondences similar to the Schensted correspondence. This
approach is introduced by Fomin \cite{schensted_for_dual_graphs} and
concerns a family of insertion algorithms sending permutations onto
pairs of labeled combinatorial objects \emph{of the same shape},
interpreted as saturated chains in the differential poset
considered.

For any permutation $\sigma$, the growth diagram $d(\sigma)$ is
build the following way. First draw the permutation matrix of
$\sigma$ ; next fill the left and lower boundary of $d(\sigma)$ with
the empty combinatorial object generally denoted by $\emptyset$. The
rest of the construction is iterative ; $d(\sigma)$ is filled from
its lower left corner to its upper right corner, following the
diagonal. At each step and for any configuration $(a_1,a_2,\alpha)$
as pictured above (Fig. 1), 
$z$ is obtained by application of the $1$-correspondence to $t$,
$x$, $y$ and $\alpha$.

\section[Fomin's approach applied to the hypoplactic insertion algorithm]
{Fomin's approach applied to the hypoplactic insertion
algorithm}\label{section::hypo}


\def\mya{{\bf a}}

Introduced in \cite{ncsf4}, the hypoplactic correspondence is an
insertion algorithm, analogous to the Robinson-Schensted
correspondence, mapping a permutation to a pair made of a
quasi-ribbon tableau and a ribbon tableau. It appears in the study
of noncommutative symmetric functions. A ribbon tableau is a
composition diagram filled with positive integers in such a way that
entries increase across lines from left to right, and up columns. In
a quasi-ribbon diagram, entries increase down columns. As usual, the
insertion tableau of a permutation is iteratively constructed
reading its letters from left to right. To insert a letter $\mya$ in
a quasi-ribbon, compare $\mya$ with the last letter $z$ in its last
row. If $\mya$ is greater then just append it to the right of $z$.
Otherwise, reading the quasi-ribbon from left to right and top to
bottom, find the last entry $y$ such that $y \leq \mya$, then insert
a cell labeled $\mya$ just to the right of the one labeled $y$ and
shift the rest of the quasi-ribbon below the newly created cell. Let
us apply the algorithm to the permutation $\sigma = 415362$.

\def\myspace{\quad}

\begin{example}\label{example::hypo}
{\small $$ \ruban{4^1\\} \myspace \to \myspace \ruban{1^1\\{\bf 4}^2\\} \myspace \to \myspace \ruban{1\\4&5^1\\}%
\myspace \to \myspace \ruban{1&3^1\\\ &{\bf 4}^2&{\bf 5}^2\\}
\myspace \to \myspace \ruban{1&3\\\ &4&5&6^1\\} \myspace \to
\myspace P(\sigma) \,\, = \,\, \ruban{1&2^1\\ \ &{\bf 3}^2\\ \ &{\bf
4}^2&{\bf 5}^2&{\bf 6}^2\\}$$}
\end{example}

\noindent where $x^1$ means that inserting or appending the cell
labeled $x$ is the first action performed during the current step,
and the cells with a bold entry are the ones shifted down, this is
the second action of the current step.

Note that the shape of the quasi-ribbon $P(\sigma)$ is the recoils
composition of $\sigma$ (that is the descents composition of
$\sigma^{-1}$), and that $P(\sigma)$ is canonically labeled from
left to right and from top to bottom. The labels of the cells of the
ribbon tableau $Q(\sigma)$ record the positions in $\sigma$ of the
labels of the cells of $P(\sigma)$. {\small
$$ \ruban{1\\} \myspace \to \myspace \ruban{2\\1\\} \myspace \to \myspace \ruban{2\\1&3\\}%
\myspace \to \myspace \ruban{2&4\\\ &1&3\\} \myspace \to \myspace
\ruban{2&4\\\ &1&3&5\\} \myspace \to \myspace Q(\sigma) \,\, = \,\,
\ruban{2&6\\ \ &4\\ \ &1&3&5\\}$$}

Now let us introduce the two dual graded graphs we will use to make
the connection between the hypoplactic insertion algorithm and
Fomin's approach for RSK.

\subsection{Dual graded graphs on compositions of integers}\label{section::hypo::graphs}\

\def\liftedbintree{\emph{lifted binary tree}}
\def\reflectedliftedbintree{\emph{reflected lifted binary tree}}
\def\binword{\emph{Binword}}


In this section, we are interested in isomorphic images of two
graphs studied by Fomin, namely the \liftedbintree\ and \binword\
 (\cite{graph_duality}, Example 2.4.1 and Fig.12). Their vertices are
words on the alphabet $\{\,0, 1\}$.
\begin{enumerate}
  \item in the \liftedbintree, a word $w$ is covered by the two words $w.0$
  and $w.1$ (where . denotes the usual concatenation of words), except
  $0$ is only covered by $1$ ;
  \item in \binword, there is an edge $(u, v)$ if $u$ is obtained by deleting a single letter (but
  not the first one) from $v$ ; in addition there is an edge $(0, 1)$.
\end{enumerate}

\begin{lemma}\label{lemma::compo::to::words}
There is a one-to-one correspondence between compositions of an
integer $n$ and $n$-letter words in the alphabet $\{\, 0,1\}$,
mapping a composition $c$ to the word read after filling its diagram
from left to right and top to bottom, with 1's in the first box and
in any box following a descent position. {\small
$$ c \,\, = \,\, 321 \quad \equiv \quad \ruban{ & &\\ \ & \ &  & \\ \ & \ & \ &
\\} \qquad ; \qquad \ruban{1 & 0  & 0\\ \ & \ & 1 &
0 \\ \ & \ & \ & 1\\} \quad \equiv \quad w_c \quad = \quad
100101$$}\end{lemma}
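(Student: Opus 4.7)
The plan is to prove the bijection by giving both the forward map (compositions to words) and its explicit inverse, then verifying that each composition yields a valid word starting with $1$ and that the two maps compose to the identity. Since compositions of $n$ are enumerated by $2^{n-1}$, which equals the number of $n$-letter $\{0,1\}$-words whose first letter is $1$, a counting shortcut would also close the argument once injectivity is established; but producing an explicit inverse is cleaner and more informative.

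First, I would restate the forward map $\Phi$ encoded by the ribbon-filling rule: given a composition $c = (c_1, \ldots, c_k)$ of $n$, the ribbon diagram of $c$ has rows of lengths $c_1, \ldots, c_k$; the rule places a $1$ in the first box of every row (the first box overall, and each box immediately following a descent position) and a $0$ elsewhere. Reading left to right and top to bottom yields
\[
\Phi(c) \;=\; 1\,0^{c_1 - 1}\,1\,0^{c_2 - 1}\,\cdots\,1\,0^{c_k - 1},
\]
a word of length $c_1 + \cdots + c_k = n$ beginning with $1$.

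Next I would define the inverse $\Psi$: given an $n$-letter word $w = w_1 w_2 \cdots w_n$ with $w_1 = 1$, let $1 = i_1 < i_2 < \cdots < i_k$ be the positions of the $1$'s in $w$, and set
\[
\Psi(w) \;=\; (\,i_2 - i_1,\; i_3 - i_2,\; \ldots,\; i_k - i_{k-1},\; n+1-i_k\,).
\]
Each part is a positive integer and the parts sum to $n$, so $\Psi(w)$ is a composition of $n$.

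Finally, I would check $\Psi \circ \Phi = \mathrm{id}$ and $\Phi \circ \Psi = \mathrm{id}$: the positions of the $1$'s in $\Phi(c)$ are precisely $1,\,1+c_1,\,1+c_1+c_2,\,\ldots,\,1+c_1+\cdots+c_{k-1}$, whose consecutive differences (together with the tail) recover $(c_1,\ldots,c_k)$; conversely, applying $\Phi$ to $\Psi(w)$ reproduces $w$ block by block. There is no real obstacle here — the only point requiring care is the convention that the first box is always labeled $1$, so the image of $\Phi$ consists exactly of words with $w_1 = 1$, which is the implicit meaning of the statement (matching the example $w_{321} = 100101$).
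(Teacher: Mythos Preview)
Your argument is correct. The paper actually states this lemma without proof, treating it as an elementary observation and moving directly to ``With this lemma, we can now build two dual graded graphs\ldots''. So there is nothing to compare against; you have supplied the details the paper leaves implicit.

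Your explicit formula $\Phi(c) = 1\,0^{c_1-1}\,1\,0^{c_2-1}\cdots 1\,0^{c_k-1}$ and the inverse via successive differences of the positions of the $1$'s are exactly the standard encoding, and your check on the example $c=(3,2,1)\mapsto 100101$ matches the paper. Your remark that the image of $\Phi$ is precisely the set of $n$-letter words with $w_1=1$ is a worthwhile clarification: the lemma as phrased in the paper says ``$n$-letter words in the alphabet $\{0,1\}$'' without that restriction, but the filling rule forces the first letter to be $1$, and the cardinality $2^{n-1}$ confirms that this is the intended codomain.
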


With this lemma, we can now build two dual graded graphs whose
vertices of rank $n$ are all the compositions of the integer $n$,
with covering relations obtained by expressing the ones above on
compositions of integers rather than on $\{\, 0,1\}$-words. So in
the first graph, a composition $c \neq \emptyset$ is covered by two
compositions obtained either by increasing its last part, or by
appending $1$ after its last part ; in addition there is an edge
$(\emptyset, 1)$. In the second graph, a composition $c \neq
\emptyset$ is covered by the compositions obtained either by
increasing a single part or by inserting a single 1, or by first
splitting a single part into two parts and then increase one of the
parts obtained, in addition there is an edge $(\emptyset, 1)$. This
version of \binword\ appeared in \cite{anaYoungComp}.

\begin{multicols}{2}

\def\colSep{0.1}
\def\rowSep{0.9}
{\small
$$\begin{psmatrix}[colsep=\colSep,rowsep=\rowSep]
[name=c4]4 &&  [name=c31]31 && [name=c22]22 && [name=c211]211
&[name=c13]13 && [name=c121]121 && [name=c112]112 && [name=c1111]1111\\[0pt]
& [name=c3]3 &&&&  [name=c21]21 &&& [name=c12]12 &&&& [name=c111]111\\[0pt]
&&&[name=c2]2 &&&&&&&&[name=c11]11\\[0pt]
&&&&&&&[name=c1]1\\[0pt]
&&&&&&&[name=c0]\emptyset
\psset{nodesep=3pt,arrows=-} \ncline{c0}{c1} \ncline{c1}{c2}
\ncline{c1}{c11} \ncline{c2}{c3} \ncline{c2}{c21} \ncline{c11}{c12}
\ncline{c11}{c111}%
\ncline{c3}{c4} \ncline{c3}{c31}%
\ncline{c21}{c22} \ncline{c21}{c211}%
\ncline{c12}{c13} \ncline{c12}{c121}%
\ncline{c111}{c112} \ncline{c111}{c1111}%
\end{psmatrix}$$}

\columnbreak

\def\colSep{0.25}
\def\rowSep{0.9}
{\small
$$\begin{psmatrix}[colsep=\colSep,rowsep=\rowSep]
[name=c4]4 &&  [name=c31]31 && [name=c22]22 && [name=c211]211
&&[name=c13]13 && [name=c121]121 && [name=c112]112 && [name=c1111]1111\\[0pt]
& [name=c3]3 &&&&  [name=c21]21 &&& [name=c12]12 &&&& [name=c111]111\\[0pt]
&&&[name=c2]2 &&&&&&&&[name=c11]11\\[0pt]
&&&&&&&[name=c1]1\\[0pt]
&&&&&&&[name=c0]\emptyset
\psset{nodesep=3pt,arrows=-} \ncline{c0}{c1} \ncline{c1}{c2}
\ncline{c1}{c11}%
\ncline{c2}{c3} \ncline{c2}{c21} \ncline{c2}{c12}%
\ncline{c11}{c12} \ncline{c11}{c111} \ncline{c11}{c21}%
\ncline{c3}{c4} \ncline{c3}{c31} \ncline{c3}{c13} \ncline{c3}{c22}%
\ncline{c21}{c31} \ncline{c21}{c22} \ncline{c21}{c211} \ncline{c21}{c121}%
\ncline{c12}{c22} \ncline{c12}{c13} \ncline{c12}{c121} \ncline{c12}{c112}%
\ncline{c111}{c211} \ncline{c111}{c121} \ncline{c111}{c112} \ncline{c111}{c1111}%
\end{psmatrix}$$}

\end{multicols}

\vspace{-0.75cm}
\begin{figure}[h]\caption{The \emph{\liftedbintree}\ and \binword\ graphs on compositions of integers.}\end{figure}

\begin{proposition}\label{proposition::rcor::hypo}The following describes a $1$-correspondence
in the \emph{\liftedbintree}\ and \emph{\binword}\ as defined above
on compositions on integers.
\end{proposition}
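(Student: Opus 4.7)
The plan is to transport the standard local rule on Fomin's \liftedbintree\ and \binword\ graphs on $\{0,1\}$-words back to compositions via the bijection of Lemma~\ref{lemma::compo::to::words}. Under this bijection, an up-edge in the lifted binary tree corresponds to appending a $0$ or a $1$ to the right of a word, whereas an up-edge in binword corresponds to inserting a single letter at any non-leading position. Because the two graphs on compositions are isomorphic to Fomin's originals, a $1$-correspondence on the latter yields one on the former; the real content is to make the local rule explicit on words and then translate it into operations on compositions that will be directly usable in the growth-diagram construction of the next subsections.

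Concretely, I would define $\phi$ as follows. Given a common endpoint $z$ at rank $n+1$ together with an incoming lifted-binary-tree edge $b_2=(x,z)$ and an incoming binword edge $b_1=(y,z)$, let $p_2$ be the last position of $z$ (so that $x$ is obtained by deleting the letter at $p_2$) and let $p_1$ be the position removed from $z$ to produce $y$. If $p_1\ne p_2$, set $t$ to be the word obtained from $z$ by deleting both letters; then $a_1=(t,x)$ is a legal lifted-binary-tree edge, $a_2=(t,y)$ is a legal binword edge, and one sets $\alpha=0$. If $p_1=p_2$, then necessarily $x=y$, and I set $t=x=y$, $a_1=a_2=(t,t)$ and $\alpha=1$; this produces the identity contribution in the commutation relation~(\ref{equa::duality}). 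The convention for genuinely degenerate pairs $b_1=b_2=(x_0,x_0)$ is then satisfied by fiat.

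The remaining verification is to exhibit the inverse and check that the two endpoint conditions $\mathrm{end}(a_1)=\mathrm{start}(b_2)$, $\mathrm{end}(a_2)=\mathrm{start}(b_1)$ hold in each case. The main obstacle, and the reason a direct case analysis purely on compositions is subtle, is that the insertion of a single $0$ or $1$ into a word can translate into any of three distinct operations on the associated composition (increasing an existing part, inserting a new part of size~$1$, or first splitting a part and then increasing one of the halves). One must therefore check subcase by subcase that the candidate $t$ produced by deleting two letters from $z$ is always a legal composition and that $a_1$, $a_2$ really are edges of the two graphs as described in Section~\ref{section::hypo::graphs}. Once this dictionary is established, bijectivity of $\phi$ is immediate, and the duality relation $D_{n+1}U_n=U_{n-1}D_n+I_n$ on $\mathbb{K}P$ follows by summing the local bijection over all configurations at rank~$n$, the $I_n$-term being precisely the contribution of the $\alpha=1$ cases.
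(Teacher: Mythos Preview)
Your overall strategy coincides with the paper's: transport a $1$-correspondence on $\{0,1\}$-words through the bijection of Lemma~\ref{lemma::compo::to::words}. The paper's argument, however, is three lines: it cites Fomin's Lemma~4.6.1 in \cite{schensted_for_dual_graphs} (a specific $1$-correspondence for the lifted binary tree and \emph{Binword} on words), observes that interchanging the letters $0$ and $1$ throughout that rule yields a second valid $1$-correspondence $\phi$ on the same pair of graphs, and then asserts that transporting this swapped $\phi$ via Lemma~\ref{lemma::compo::to::words} is exactly Algorithm~1.

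Your proposal has a genuine gap at this last step. The proposition claims that \emph{Algorithm~1} is a $1$-correspondence, not merely that some $1$-correspondence on compositions exists. You describe a backward rule on words and sketch why it is bijective, but you never verify that its inverse, translated to compositions, reproduces the three cases of Algorithm~1. This is not automatic: the pair (lifted binary tree, \emph{Binword}) carries more than one $1$-correspondence---the $0\leftrightarrow 1$ swap already produces two distinct ones---and the paper singles out the swapped version precisely because only it translates into Algorithm~1. Without that identification your argument establishes a weaker statement than the one claimed. A minor bookkeeping point as well: with $b_2=(x,z)$ a lifted-binary-tree edge (so $x$ is $z$ minus its last letter) and $b_1=(y,z)$ a \emph{Binword} edge, deleting both positions from $z$ makes $(t,y)$ the lifted-binary-tree edge and $(t,x)$ the \emph{Binword} edge, the reverse of what you wrote.
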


\begin{algorithm}
\caption{A natural $r$-correspondence in the \liftedbintree\ and
\binword.} \label{algo::rcor::qsym::ncsf}
\begin{algorithmic}[1]
\IF{$t = x = y$ and $\alpha = 1$}%
    \STATE $z \Affect x$, with its last part increased%
\ELSE %
    \IF{$x = y$}%
        \STATE $z \Affect x $, with an additional 1 at the end%
    \ELSE %
        \IF{$x$ ends with 1}%
            \STATE $z \Affect x $, with an additional 1 at the end%
        \ENDIF%
    \ENDIF%
\ENDIF
\end{algorithmic}
\end{algorithm}

\begin{proof}
To show that this description is a $1$-correspondence, one may
consider the one described in
(\cite{schensted_for_dual_graphs}-Lemma 4.6.1) and replace $y0$ by
$y1$ wherever it appears, and conversely. One gets a second
$1$-correspondence $\phi$ in the \emph{\liftedbintree}\ and
\emph{\binword}\ as defined on $\{\,0, 1\}$-words. Now apply Remark
\ref{lemma::compo::to::words} to $\phi$ and you get the description
above.\end{proof}

In the next section, we show that using this $1$-correspondence to
build Fomin's growth diagram for any permutation $\sigma$, one gets
two saturated chains that can be translated into a quasi-ribbon
tableau $\fominPhypo(\sigma)$ and a ribbon tableau
$\fominQhypo(\sigma)$, and one naturally has $P(\sigma) =
\fominPhypo(\sigma)$ and $Q(\sigma) = \fominQhypo(\sigma)$.

\def\growthdiagramOfSigma{d(\sigma)}

\subsection{Growth diagram - equivalence of the two
constructions}\label{section::hypo::fomin}\

Let us build the growth diagram $\growthdiagramOfSigma$ of the
permutation $\sigma = 415362$, using the $1$-correspondence defined
in Proposition \ref{proposition::rcor::hypo}. On the upper boundary
of $\growthdiagramOfSigma$, one gets a satureted chain $\fominQhypo$
in the \liftedbintree, and on the right boundary a chain
$\fominPhypo$ in \binword. Fomin \cite{finite_posets} gave a
geometric construction for the Schensted insertion algorithm,
Cameron and Killpatrick \cite{evacuation_killpatrick, domino_fibo}
did the same for the Young-Fibonacci and the domino-Fibonacci
insertion algorithms. We will now do the same for the hypoplactic
insertion algorithm. It will consist in drawing shadow lines that
can be used to directly determine the tableaux $P(\sigma)$ and
$Q(\sigma)$ obtained through the hypoplactic insertion algorithm. To
begin with, draw the permutation matrix of $\sigma$. Then from
bottom to top, draw a set of broken lines to join the symbol
$\permOneSymb$ on the current line of the permutation matrix to the
one on the line just above if the second symbol $\permOneSymb$ is
situated to the right of the first one, if this is not the case then
start a new broken line with the second symbol $\permOneSymb$.
$P(\sigma)$ corresponds to reading vertical coordinates of the
symbols $\permOneSymb$ on the broken lines, from left to right and
from bottom to top. As for $Q(\sigma)$, it corresponds to reading
horizontal coordinates of the symbols $\permOneSymb$ on the broken
lines, in the same order.

\def\colSep{-0.04}
\def\rowSep{-0.2}
\def\corner{\square}%

{\small \centerline{
\newdimen\vcadre\vcadre=0.15cm 
\newdimen\hcadre\hcadre=0.15cm 
\setlength\unitlength{1.2mm} $\xymatrix@R=0cm@C=0mm{
*{\GrTeXBox{\smallEmptyCompo}}\arx1[dd]\arx1[rr] && *{\GrTeXBox{\smallCompoOne}}\arx1[dd]\arx1[rr] && *{\GrTeXBox{\smallCompoOneOne}}\arx1[dd]\arx1[rr] &&%
*{\GrTeXBox{\smallCompoOneTwo}}\arx1[dd]\arx1[rr] && *{\GrTeXBox{\smallCompoTwoTwo}}\arx1[dd]\arx1[rr] && *{\GrTeXBox{\smallCompoTwoThree}}\arx1[dd]\arx1[rr] &&%
*{\GrTeXBox{\smallCompoTwoOneThree}}\arx1[dd]\\
& *{\GrTeXBox{}} && *{\GrTeXBox{}} && *{\GrTeXBox{}} &&%
*{\GrTeXBox{}} && *{\GrTeXBox{\permOneSymb}}\arx3[dd] && *{\GrTeXBox{}}  && *{\GrTeXBox{6}}\\
*{\GrTeXBox{\smallEmptyCompo}}\arx1[dd]\arx1[rr] && *{\GrTeXBox{\smallCompoOne}}\arx1[dd]\arx1[rr] && *{\GrTeXBox{\smallCompoOneOne}}\arx1[dd]\arx1[rr] &&%
*{\GrTeXBox{\smallCompoOneTwo}}\arx1[dd]\arx1[rr] && *{\GrTeXBox{\smallCompoTwoTwo}}\arx1[dd]\arx1[rr] && *{\GrTeXBox{\smallCompoTwoTwo}}\arx1[dd]\arx1[rr] &&%
*{\GrTeXBox{\smallCompoTwoOneTwo}}\arx1[dd]\\
& *{\GrTeXBox{}} && *{\GrTeXBox{}} && *{\GrTeXBox{\permOneSymb}}\arx3[dd]\arx3[rrrr] &&%
*{\GrTeXBox{}} && *{\GrTeXBox{}} && *{\GrTeXBox{}} && *{\GrTeXBox{5}}\\
*{\GrTeXBox{\smallEmptyCompo}}\arx1[dd]\arx1[rr] && *{\GrTeXBox{\smallCompoOne}}\arx1[dd]\arx1[rr] && *{\GrTeXBox{\smallCompoOneOne}}\arx1[dd]\arx1[rr] &&%
*{\GrTeXBox{\smallCompoOneOne}}\arx1[dd]\arx1[rr] && *{\GrTeXBox{\smallCompoTwoOne}}\arx1[dd]\arx1[rr] && *{\GrTeXBox{\smallCompoTwoOne}}\arx1[dd]\arx1[rr] &&%
*{\GrTeXBox{\smallCompoTwoOneOne}}\arx1[dd]\\
& *{\GrTeXBox{\permOneSymb}}\arx3[rrrr] && *{\GrTeXBox{}} && *{\GrTeXBox{}} &&%
*{\GrTeXBox{}} && *{\GrTeXBox{}} && *{\GrTeXBox{}} && *{\GrTeXBox{4}}\\
*{\GrTeXBox{\smallEmptyCompo}}\arx1[dd]\arx1[rr] && *{\GrTeXBox{\smallEmptyCompo}}\arx1[dd]\arx1[rr] && *{\GrTeXBox{\smallCompoOne}}\arx1[dd]\arx1[rr] &&%
*{\GrTeXBox{\smallCompoOne}}\arx1[dd]\arx1[rr] && *{\GrTeXBox{\smallCompoTwo}}\arx1[dd]\arx1[rr] && *{\GrTeXBox{\smallCompoTwo}}\arx1[dd]\arx1[rr] &&%
*{\GrTeXBox{\smallCompoTwoOne}}\arx1[dd]\\
& *{\GrTeXBox{}} && *{\GrTeXBox{}} && *{\GrTeXBox{}} &&%
*{\GrTeXBox{\permOneSymb}} && *{\GrTeXBox{}} && *{\GrTeXBox{}} && *{\GrTeXBox{3}}\\
*{\GrTeXBox{\smallEmptyCompo}}\arx1[dd]\arx1[rr] && *{\GrTeXBox{\smallEmptyCompo}}\arx1[dd]\arx1[rr] && *{\GrTeXBox{\smallCompoOne}}\arx1[dd]\arx1[rr] &&%
*{\GrTeXBox{\smallCompoOne}}\arx1[dd]\arx1[rr] && *{\GrTeXBox{\smallCompoOne}}\arx1[dd]\arx1[rr] && *{\GrTeXBox{\smallCompoOne}}\arx1[dd]\arx1[rr] &&%
*{\GrTeXBox{\smallCompoTwo}}\arx1[dd]\\
& *{\GrTeXBox{}} && *{\GrTeXBox{}} && *{\GrTeXBox{}} &&%
*{\GrTeXBox{}} && *{\GrTeXBox{}} && *{\GrTeXBox{\permOneSymb}}\arx3[dd] && *{\GrTeXBox{2}}\\
*{\GrTeXBox{\smallEmptyCompo}}\arx1[dd]\arx1[rr] && *{\GrTeXBox{\smallEmptyCompo}}\arx1[dd]\arx1[rr] && *{\GrTeXBox{\smallCompoOne}}\arx1[dd]\arx1[rr] &&%
*{\GrTeXBox{\smallCompoOne}}\arx1[dd]\arx1[rr] && *{\GrTeXBox{\smallCompoOne}}\arx1[dd]\arx1[rr] && *{\GrTeXBox{\smallCompoOne}}\arx1[dd]\arx1[rr] &&%
*{\GrTeXBox{\smallCompoOne}}\arx1[dd]\\
& *{\GrTeXBox{}} && *{\GrTeXBox{\permOneSymb}}\arx3[rrrrrrrr] && *{\GrTeXBox{}} &&%
*{\GrTeXBox{}} && *{\GrTeXBox{}} && *{\GrTeXBox{}}  && *{\GrTeXBox{1}}\\
*{\GrTeXBox{\smallEmptyCompo}}\arx1[rr] && *{\GrTeXBox{\smallEmptyCompo}}\arx1[rr] && *{\GrTeXBox{\smallEmptyCompo}}\arx1[rr] &&%
*{\GrTeXBox{\smallEmptyCompo}}\arx1[rr] && *{\GrTeXBox{\smallEmptyCompo}}\arx1[rr] && *{\GrTeXBox{\smallEmptyCompo}}\arx1[rr] &&%
*{\GrTeXBox{\smallEmptyCompo}}\\
& *{\GrTeXBox{1}} && *{\GrTeXBox{2}} && *{\GrTeXBox{3}}  && *{\GrTeXBox{4}} && *{\GrTeXBox{5}} && *{\GrTeXBox{6}}\\
 }$ } } \vspace{-0.5cm}
\begin{figure}[h]
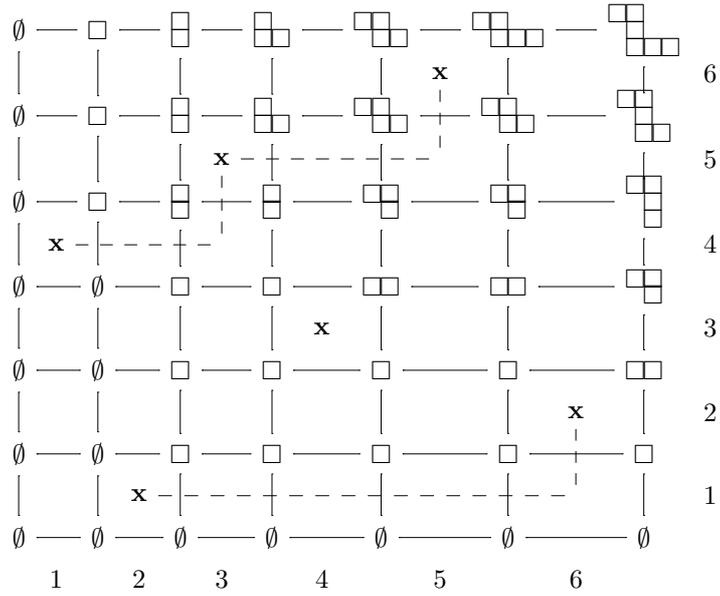
\label{figure::growth::diagram::hypo}
\caption{Example of growth diagram for the hypoplactic insertion
algorithm.}
\end{figure}

Now one question which naturally presents itself is how to convert
the chain $\fominPhypo$ into a quasi-ribbon tableau
$\fominPhypo(\sigma)$ and the chain $\fominQhypo$ into a ribbon
tableau  $\fominQhypo(\sigma)$.\\

\subsubsection{From a chain of compositions to a standard
quasi-ribbon tableau}\

\def\mysymbol{{\bf \centerdot}}
Recall that in the chain $\fominPhypo$, a composition is covered by
another one obtained either by increasing its last part, or by
appending $1$ after this last part, in addition $\emptyset$ is
covered by $1$.

{\small $$ \fominPhypo \myspace = \myspace \emptyset \myspace \to
\myspace {\bf 1} \myspace \to \myspace 2\mysymbol \myspace \to
\myspace 21\mysymbol \myspace \to \myspace 21{\bf 1} \myspace \to
\myspace 21{\bf 2} \myspace \to \myspace 213$$}

So in order to get a quasi-ribbon tableau, one is simply to label
the cells appearing in the right boundary of $\growthdiagramOfSigma$
in the order they occur, and it is clear that this will always
produce a quasi-ribbon tableau.

{\small $$\emptyset \myspace \to \myspace \ruban{1^1\\} \myspace \to
\myspace \ruban{1&2^1\\}
\myspace \to \myspace \ruban{1&2\\ \ &3^1\\} \myspace \to \myspace \ruban{1&2\\ \ &3\\
\ &4^1\\} \myspace \to \myspace \ruban{1&2\\ \
&3\\ \ &4&5^1\\} \myspace \to \myspace \fominPhypo(\sigma) \myspace = \myspace \ruban{1&2\\ \ &3\\
\ &4&5&6^1\\}$$}

\subsubsection{From a chain of compositions to a standard
ribbon tableau}\

Recall that in the chain $\fominQhypo$, a composition is covered by
another one obtained either by increasing a single part, or by
appending $1$ after any part (eventually just in front), in addition
there is an edge $(\emptyset, 1)$.

{\small $$ Q \myspace = \myspace \emptyset \myspace \to \myspace
1\mysymbol \myspace \to \myspace 1{\bf 1} \myspace \to \myspace {\bf
1}2 \myspace \to \myspace 2{\bf 2} \myspace \to \myspace 2\mysymbol3
\myspace \to \myspace 213$$}

In order to get a ribbon tableau, we will process $\fominQhypo$ as
follows.
\begin{enumerate}
  \item when one part has been increased at step $k$,
  we append a cell labeled $k$ just at the end of the corresponding
  line, and the lower part of the ribbon tableau is shifted to the right ;
  \item when $1$ has been inserted after the $i^{th}$ part,
  we shift down the portion of the ribbon tableau starting at the last cell on the $i^{th}$ line,
  and then we insert a cell labeled $k$ at this position.
\end{enumerate}

It is clear that this will always produce a ribbon tableau.

{\small $$ \emptyset \myspace \to \myspace \ruban{1^1\\} \myspace
\to \myspace \ruban{2^2\\{\bf 1}^1\\} \myspace \to \myspace
\ruban{2& \ \\1&3^1\\} \myspace \to \myspace \ruban{2&4^2\\ \ &{\bf 1}^1&{\bf 3}^1\\} \myspace \to \myspace \ruban{2&4\\
\ &1&3&5^1\\} \myspace \to \myspace Q(\sigma) \myspace = \myspace \ruban{2&6^2\\
\ &{\bf 4}^1\\ \ &{\bf 1}^1&{\bf 3}^1&{\bf 5}^1\\}$$}

\begin{proposition}For any permutation $\sigma$,
$P(\sigma) = \fominPhypo(\sigma)$ and $Q(\sigma) =
\fominQhypo(\sigma)$.
\end{proposition}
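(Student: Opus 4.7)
The plan is to prove both equalities by an induction on the construction of the growth diagram $\growthdiagramOfSigma$, matching each local step of the diagram to one step of the hypoplactic insertion. Denote by $\lambda(i,j)$ the composition placed by the growth-diagram procedure at the lattice point $(i,j)$ of $\growthdiagramOfSigma$, so that $\lambda(0,j)$ and $\lambda(i,0)$ are all $\emptyset$. The key is to strengthen the statement to an interior invariant: for every $(i, j)$ with $0 \le i, j \le n$, the composition $\lambda(i, j)$ equals the shape of the hypoplactic quasi-ribbon tableau of the subword $\sigma^{[i, j]}$ of $\sigma_1 \sigma_2 \cdots \sigma_i$ formed by those letters whose value is at most $j$. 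Once this invariant is in place, the right and top boundaries of $\growthdiagramOfSigma$ become the chains of shapes produced, respectively, by the hypoplactic algorithm on the subwords obtained by fixing the column index at $n$ and increasing the value threshold from $0$ to $n$, and on the prefixes of $\sigma$.

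I would prove the invariant by induction on $i + j$, with base case immediate because the left column and the bottom row of $\growthdiagramOfSigma$ are uniformly $\emptyset$ and the empty word has empty $P$-tableau. For the inductive step, fix a unit square with corners $t = \lambda(i-1, j-1)$, $x = \lambda(i-1, j)$, $y = \lambda(i, j-1)$, $z = \lambda(i, j)$, and $\alpha = 1$ exactly when $\sigma_i = j$, and distinguish three cases according to the relative order of $\sigma_i$ and $j$. When $\sigma_i = j$, injectivity of $\sigma$ forces $\sigma^{[i-1, j]} = \sigma^{[i, j-1]} = \sigma^{[i-1, j-1]}$, so $t = x = y$; moreover $j$ is the maximum letter of $\sigma^{[i, j]}$ and so is simply appended at the right of the last row, yielding exactly ``$x$ with its last part increased'' as required by the first clause of Proposition~\ref{proposition::rcor::hypo}. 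When $\sigma_i > j$, neither $\sigma^{[i, j-1]}$ nor $\sigma^{[i, j]}$ sees the letter $\sigma_i$, so $z = x$ and $y = t$ and the local rule is consistent with this trivial update. When $\sigma_i < j$, the subwords on both sides of the square gain the new letter $\sigma_i$, and one must verify that the way its insertion changes the quasi-ribbon shape on each side is consistent with the ``$z = x$ with an additional $1$ at the end'' clause whenever that clause applies.

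Once the interior invariant is established, reading off the two boundaries is direct. Along the right boundary of $\growthdiagramOfSigma$, the step $\lambda(n, j-1) \to \lambda(n, j)$ records the insertion of the single letter $j$ into the quasi-ribbon and therefore adds exactly one cell; labelling that new cell by $j$ reproduces the canonical left-to-right, top-to-bottom labelling of the hypoplactic quasi-ribbon, so $\fominPhypo(\sigma) = P(\sigma)$. Along the top boundary, the step $\lambda(i-1, n) \to \lambda(i, n)$ is the hypoplactic insertion of $\sigma_i$, and the two conversion rules of Subsection~\ref{section::hypo::fomin} (``increase a part'' versus ``insert a $1$ after the $k$-th part'') mirror the two actions of the hypoplactic algorithm (``append $\sigma_i$ at the right of the last row'' versus ``insert $\sigma_i$ and shift the tail downward''); labelling each new cell by its step-number $i$ then produces the ribbon tableau $Q(\sigma)$, giving $\fominQhypo(\sigma) = Q(\sigma)$.

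The hardest step of this plan is the case $\sigma_i < j$ of the interior induction. Here I must show that the change of shape from $P(\sigma^{[i-1, j-1]})$ to $P(\sigma^{[i, j-1]})$ and the change from $P(\sigma^{[i-1, j]})$ to $P(\sigma^{[i, j]})$ are related in precisely the way dictated by the local rule of Proposition~\ref{proposition::rcor::hypo}. This amounts to analysing where $\sigma_i$ enters the quasi-ribbon relative to the ghost of the letter $j$, which may or may not be present on the two sides of the square, and arguing that the local data $(t, x, y)$ already encodes enough information about the recoils composition to determine the outcome, even though the full tableaux on the two sides may differ significantly in size. It is ultimately a verification that the specific $1$-correspondence chosen in Proposition~\ref{proposition::rcor::hypo} is designed precisely to make Fomin's growth-diagram approach compatible with the hypoplactic insertion.
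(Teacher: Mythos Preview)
Your approach is essentially the same as the paper's: both rest on the observation that the compositions on the top and right boundaries of the growth diagram are the shapes of $P$ applied, respectively, to the prefixes $\sigma_1\cdots\sigma_k$ and to the value-restrictions $\sigma_{/[1..k]}$. The paper simply states this as a ``remark'' and reads off the proposition; you strengthen it to the interior invariant $\lambda(i,j)=\text{shape of }P(\sigma^{[i,j]})$ and outline the square-by-square induction needed to establish it, correctly flagging the case $\sigma_i<j$ (with $j$ already present among $\sigma_1,\dots,\sigma_{i-1}$) as the one where the local rule of Proposition~\ref{proposition::rcor::hypo} must actually be checked against the recoils combinatorics.
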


\begin{proof}
This follows from the definition of the hypoplactic insertion
algorithm, and the remark that any composition $\fominQhypo_k$
appearing in $\fominQhypo$ is the shape of the tableau $P(\sigma_1
\sigma_2 \cdots \sigma_k)$ where $\sigma_1 \sigma_2 \cdots \sigma_k$
is the restriction of $\sigma$ to its first $k$ letters, and any
composition $\fominPhypo_k$ appearing in $\fominPhypo$ is the shape
of $P(\sigma_{/[1..k]})$ where $\sigma_{/[1..k]}$ is the restriction
of $\sigma$ to the interval $[1..k]$. Indeed, with this remark, the
canonical labelling of $P(\sigma)$ coincides with the description of
the conversion of $\fominPhypo$ into $\fominPhypo(\sigma)$. As for
$\fominQhypo$, its conversion into $Q(\sigma)$ clearly coincides
with the description of the hypoplactic insertion algorithm (see
Example \ref{example::hypo}).
\end{proof}

\section{Fomin's approach applied to the sylvester insertion algorithm}\label{section::sylv}

\subsection{The insertion algorithm}\label{section::sylv::algo}\

The sylvester insertion algorithm is a variant of the BST
\cite{knuth_comp_programming}. Introduced by Hivert et al
\cite{pbt}, it was used to give a new construction in term of
noncommutative polynomials, of the algebra of Planar Binary Trees of
Loday-Ronco \cite{loday_ronco_pbt}. Trough this algorithm, the
insertion tree of a given permutation $\sigma$ is the binary search
tree built by \emph{\underline{reading $\sigma$ from right to
left}}. The recording tree is a \emph{decreasing tree}, that is to
say a labeled binary tree such that the label of each internal node
is greater than the labels of all the nodes in its subtrees. It
records the positions in $\sigma$ of the labels of the insertion
tree.



In the classical version of the BST, permutations are \emph{read
from left to right} and the insertion tree will be denoted
$\knuthbtree(\sigma)$. The recording tree $\knuthitree(\sigma)$ is
an \emph{increasing tree}, that is to say a labeled binary tree such
that the label of each internal node is smaller than the labels of
all the nodes in its subtrees. So for $\sigma = 351426$, one gets
the following insertion and recording trees. {\small
$$ \knuthbtree(\sigma) \,\, = \,\,
\vcenter{\tiny\xymatrix@C=0mm@R=1mm{
      &    &&  *=<10pt>[o][F-]{3}\ar@{-}[dll]\ar@{-}[drr] \\
      & *=<10pt>[o][F-]{1}\ar@{-}[dr] && && *=<10pt>[o][F-]{5}\ar@{-}[dl]\ar@{-}[dr] \\
      &&  *=<10pt>[o][F-]{2} && *=<10pt>[o][F-]{4} && *=<10pt>[o][F-]{6}
    }}
\qquad ; \qquad \knuthitree(\sigma) \,\, = \,\,
\vcenter{\tiny\xymatrix@C=0mm@R=1mm{
      &    &&  *=<10pt>[o][F-]{1}\ar@{-}[dll]\ar@{-}[drr] \\
      & *=<10pt>[o][F-]{3}\ar@{-}[dr] && && *=<10pt>[o][F-]{2}\ar@{-}[dl]\ar@{-}[dr] \\
      &&  *=<10pt>[o][F-]{5} && *=<10pt>[o][F-]{4} && *=<10pt>[o][F-]{6}
    }} $$}

In section \ref{section::sylv::fomin} we show how Fomin's approach
applies to the BST, but let us first introduce the corresponding
dual graded graphs.

\subsection{Dual graded graphs on binary trees}\label{section::sylv::graphs}\

\def\textpbtqleft{\emph{lattice of binary trees}}
\def\textpbtpleft{\emph{bracket tree}}
\def\textpbtpright{\emph{reflected bracket tree}}

Once again, we will consider isomorphic images of two dual graded
graphs studied by Fomin, namely the \textpbtqleft\ and the
\textpbtpleft\ (\cite{graph_duality}, Fig.13). The \textpbtqleft\ is
defined as follows. Its vertices of rank $n$ are the syntactically
correct formulae defining different versions of calculation of a
\underline{non-associative} product of $n+1$ entries. So any vertex
of rank $n$ is a valid sequence of \ $n\!-\!1$ opening and $n\!-\!1$
closing brackets inserted into $x_1 \, x_2 \cdots x_n$. In the
\textpbtpleft, two expressions are linked if one results from the
other by deleting the first entry, and then removing subsequent
unnecessary brackets, and renumbering the new expression. In the
sequel we will be considering the \textpbtpright, the graph where
two expressions are linked if one results from the other by deleting
the last entry, and then removing subsequent unnecessary brackets,
and renumbering the new expression.

\begin{lemma}\label{lemma::tree::to::bracketed::expressions}
There is a one-to-one correspondence between unlabeled binary trees
and bracketed expressions. A tree $t$ is identified with the
expression obtained by completion of $t$, adding one leaf to any
node having a single child-node, and two leaves to any childless
node. Then label the $n$ leaves of the resulting binary tree $t\,'$
according to the right-to-left infix order, using each of the labels
$1, 2, \cdots, n$ only once. If $t\,'$ is empty then the bracketed
expression is $x_1$, else the bracketed expression is obtained by
recursively reading its right and left subtrees. Below is an example
where the expression obtained is $(x_1 x_2)((x_3
x_4)x_5)$.\end{lemma}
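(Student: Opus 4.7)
The plan is to decompose the claimed correspondence into two well-known intermediate bijections and verify that each is compatible with the recipe described in the statement. Throughout, denote by $\mathcal{T}_n$ the set of unlabeled binary trees with $n-1$ nodes (in the Loday--Ronco / Hivert sense, where each node has an optional left child and an optional right child), by $\mathcal{F}_n$ the set of full binary trees with $n-1$ internal nodes and $n$ leaves, and by $\mathcal{B}_n$ the set of valid bracketings of $x_1 x_2 \cdots x_n$. All three sets are enumerated by the Catalan number $C_{n-1}$, so exhibiting injective maps $\mathcal{T}_n \to \mathcal{F}_n \to \mathcal{B}_n$ whose composition is the recipe of the lemma will suffice.

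The first bijection $\mathcal{T}_n \to \mathcal{F}_n$ is precisely the completion operation $t \mapsto t\,'$. For every node of $t$ missing a left child, a left leaf is inserted; for every node missing a right child, a right leaf is inserted. The resulting tree has every internal node of arity exactly two. The inverse is obtained by discarding all leaves while remembering, at each surviving node, whether its deleted children sat on the left or the right; this recovers $t$. Hence completion is a bijection.

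The second bijection $\mathcal{F}_n \to \mathcal{B}_n$ is established by induction on $n$. The base case $n=1$ is the degenerate tree (a single leaf, labeled~$1$), mapping to the expression $x_1$. For $n \geq 2$, the root of any $t\,' \in \mathcal{F}_n$ has a left subtree $t\,'_L$ with $k$ leaves and a right subtree $t\,'_R$ with $n-k$ leaves, with $1 \leq k \leq n-1$. Because the right-to-left infix traversal of $t\,'$ first exhausts the leaves of the right subtree and then those of the left subtree, the label set $\{1,\ldots,n\}$ splits into two consecutive blocks assigned respectively to $t\,'_R$ and $t\,'_L$, and within each subtree the induced labels coincide (after the obvious shift) with the labels that the right-to-left infix rule of the lemma assigns to the subtree on its own. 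Applying the induction hypothesis to the two subtrees yields bracketed expressions $E_R$ and $E_L$, and the recursive reading rule then forms $(E_L)(E_R)$ (with parentheses wrapped so that the outermost product is unambiguous). Since every bracketing in $\mathcal{B}_n$ decomposes uniquely as a top-level product $(E_L)(E_R)$, this step is a bijection.

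The main point to verify carefully is the compatibility of labelings just invoked: that the restriction of the global right-to-left infix labeling of $t\,'$ to a subtree agrees, up to an additive shift, with the right-to-left infix labeling of that subtree taken in isolation. This follows directly from the recursive definition of infix traversal and is the only place where the labeling convention enters essentially; with it in hand, the composition $\mathcal{T}_n \to \mathcal{F}_n \to \mathcal{B}_n$ is exactly the map described in the statement, finishing the proof.
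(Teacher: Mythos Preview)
The paper does not actually supply a proof of this lemma; it is stated together with the illustrative example and then immediately used, the correspondence between full binary trees and parenthesizations being treated as folklore. Your two-step decomposition via completion and then the standard recursive bijection between full binary trees with $n$ leaves and bracketings of $x_1\cdots x_n$ is exactly the right way to spell it out, and the compatibility check on the right-to-left infix labeling is the only nontrivial point, which you handle correctly.

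One small slip: with your conventions, the right subtree $t\,'_R$ receives the labels $1,\ldots,n-k$ and the left subtree $t\,'_L$ the labels $n-k+1,\ldots,n$, so the combined expression must be $(E_R)(E_L)$, not $(E_L)(E_R)$. This is consistent with the lemma's phrasing ``recursively reading its right and left subtrees'' and with the worked example, where the right subtree contributes $(x_1x_2)$ and the left subtree contributes $((x_3x_4)x_5)$. Swapping the order as you wrote it would not yield a bracketing of $x_1x_2\cdots x_n$ in the stated order. Aside from this typographical inversion, the argument is complete.
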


\vspace{-0.5cm}
\def\unnoeud{{\large \bullet}}
\def\petitrayon{5pt}
\def\grandrayon{12pt}

$$
\begin{CD}
    \setlength\unitlength{1.4mm}
  \btlrandr
 \qquad  @>\qquad \mbox{completion}\qquad>> \qquad
  \vcenter{\tiny\xymatrix@C=0mm@R=1mm{
      &    &&  \unnoeud \ar@{-}[dll]\ar@{-}[drr] \\
      & \unnoeud \ar@{-}[dl]\ar@{-}[dr] && && \unnoeud \ar@{-}[dl] \ar@{-}[dr] \\
      5 &&  \unnoeud \ar@{-}[dl] \ar@{-}[dr] && 2 && 1 \\
      & 4 && 3
    }}
\end{CD}$$

\vskip 5pt

With this lemma, we can build two graphs whose vertices of rank $n$
are all the binary trees having $n$ nodes, with covering relations
obtained by expressing the ones above on binary trees rather than on
bracketed expressions. So one will have the following covering
relations.
\begin{enumerate}
  \item in the \textpbtqleft, any tree is covered by all those obtained from it by addition
  of a single node, in all possible ways. Below is a finite realization of the graph, from rank $0$ to rank $4$.

\begin{figure}[h]\label{fig::pbtql}\setlength\unitlength{1.2mm}
\begin{psmatrix}[colsep=0.4,rowsep=0.5]
  [name=btlll]\btlll & [name=btllr]\btllr & [name=btltolandr]\btltolandr & [name=btllandr]\btllandr
  & [name=btlrl]\btlrl & [name=btlrr]\btlrr & [name=btlrandr]\btlrandr
  & [name=btlandrl]\btlandrl & [name=btlandrr]\btlandrr
  & [name=btrll]\btrll & [name=btrlr]\btrlr & [name=btrtolandr]\btrtolandr
  & [name=btrrl]\btrrl & [name=btrrr]\btrrr \\[0pt]
  & [name=btll]\btll &&& [name=btlr]\btlr &&& [name=btlandr]\btlandr
  &&& [name=btrl]\btrl && [name=btrr]\btrr\\[0pt]
  &&&& [name=btl]\btl &&&&&& [name=btr]\btr \\[0pt]
  &&&&&&& [name=bt1]$\btone$\\[0pt]
  &&&&&&& [name=bt0]$\emptyset$
  \psset{nodesep=5pt,arrows=-}
  \ncline{btll}{btlll} \ncline{btll}{btllr} \ncline{btll}{btltolandr} \ncline{btll}{btllandr}%
  \ncline{btlr}{btltolandr} \ncline{btlr}{btllandr}
  \ncline{btlr}{btlrl} \ncline{btlr}{btlrr}%
  \ncline{btlandr}{btllandr} \ncline{btlandr}{btlrandr}
  \ncline{btlandr}{btlandrl} \ncline{btlandr}{btlandrr}
  \ncline{btrl}{btlandrl} \ncline{btrl}{btrll} \ncline{btrl}{btrlr} \ncline{btrl}{btrtolandr}
  \ncline{btrr}{btlandrr} \ncline{btrr}{btrtolandr} \ncline{btrr}{btrrl} \ncline{btrr}{btrrr}
  \ncline{btr}{btlandr} \ncline{btr}{btrl} \ncline{btr}{btrr}
  \ncline{btl}{btll} \ncline{btl}{btlr} \ncline{btl}{btlandr}
  \ncline{bt1}{btl} \ncline{bt1}{btr} \ncline{bt0}{bt1}
\end{psmatrix} \caption{The \emph{\textpbtqleft}.}
\end{figure}


  \item in the \textpbtpright, a tree $y$ covers a single tree $t$
  obtained from $y$ by deleting it's right-most node if any, or its root otherwise, and
  replacing the deleted node by its own left subtree if any.
  Below is a finite realization of the graph, from rank $0$ to rank
  $4$.\\


\def\rowSep{.2cm}
\def\colSep{0.5mm}

{\small \centerline{
\newdimen\vcadre\vcadre=0.2cm 
\newdimen\hcadre\hcadre=0.2cm 
\setlength\unitlength{1.2mm} $\xymatrix@R=\rowSep@C=\colSep{
*{\GrTeXBox{\btlll}}\arx1[dr] && *{\GrTeXBox{\btllandr}}\arx1[dl]%
& *{\GrTeXBox{\btltolandr}}\arx1[dr] & *{\GrTeXBox{\btlandrl}}\arx1[d] & *{\GrTeXBox{\btlandrr}}\arx1[dl] %
& *{\GrTeXBox{\btllr}}\arx1[d] & *{\GrTeXBox{\btlrandr}}\arx1[dl]%
& *{\GrTeXBox{\btlrl}}\arx1[dr] & *{\GrTeXBox{\btrll}}\arx1[d] & *{\GrTeXBox{\btrtolandr}}\arx1[dl] %
& *{\GrTeXBox{\btlrr}}\arx1[drr] & *{\GrTeXBox{\btrlr}}\arx1[dr] & *{\GrTeXBox{\btrrl}}\arx1[d] && *{\GrTeXBox{\btrrr}}\arx1[dll]\\
& *{\GrTeXBox{\btll}}\arx1[drr] &&& *{\GrTeXBox{\btlandr}}\arx1[dl] && *{\GrTeXBox{\btlr}}\arx1[drrrr] &&& *{\GrTeXBox{\btrl}}\arx1[dr] &&&& *{\GrTeXBox{\btrr}}\arx1[dlll]\\
&&& *{\GrTeXBox{\btl}}\arx1[drrrr] &&&&&&& *{\GrTeXBox{\btr}}\arx1[dlll] \\
&&&&&&& *{\GrTeXBox{\btone}}\arx1[d] \\
&&&&&&& *{\GrTeXBox{\emptyset}} }$ }}

\vspace{-0.5cm}

\begin{figure}[ht]\label{fig::pbtpl}\caption{The \emph{\textpbtpright}, a dual of the
\emph{\textpbtqleft}.}\end{figure}

\end{enumerate}

\begin{remark}The reader should pay attention that for a more
convenient graphical representation, vertices at rank $4$ are not
listed in the same order in the two graphs above.
\end{remark}

\begin{proposition}The \emph{\textpbtpright}\ is dual to the
\emph{\textpbtqleft}.
\end{proposition}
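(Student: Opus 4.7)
The plan is to verify the commutation identity $D\,U = U\,D + I$ directly on each rank, where $U$ is the up-operator of the \textpbtqleft\ and $D$ is the down-operator of the \textpbtpright\ defined above. Fix a non-empty binary tree $t$ with $n$ nodes, and write $r_k$ for its \emph{right-most node}, the end of the right spine from the root. The $n+1$ summands of $U\,t$ correspond to the $n+1$ empty left/right-child slots of $t$, each obtained by placing a new node $v$ in one such slot.

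Among those slots, $(r_k, R)$ is distinguished: it is the unique slot whose insertion of $v$ puts $v$ on the right spine of the resulting tree $y$, making $v$ itself the new right-most node. For this particular $y$, the definition of $D$ excises exactly $v$ and returns $t$, producing the identity summand on the right-hand side. For each of the remaining $n$ slots, $v$ lies off the right spine of $y$, so the right-most node of $y$ is still $r_k$; then $D\,y$ excises $r_k$ and splices in its left subtree. A direct inspection shows that the result is $D\,t$ with $v$ re-inserted at a naturally-determined empty slot of $D\,t$. The assignment $y \mapsto D\,y$ is a bijection between these $n$ slots of $t$ and the $n$ slots of $D\,t$: the slots of $t$ that do not touch $r_k$ correspond to the same-shaped slots of $D\,t$; and when $r_k$ is a leaf of $t$, so that excising it opens a new empty slot $(r_{k-1}, R)$ of $D\,t$, that slot is matched with the slot $(r_k, L)$ of $t$. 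Summing the contributions gives $U\,D\,t$, whence $D\,U\,t = t + U\,D\,t$. The base case $t = \emptyset$ is immediate: $U\,\emptyset$ is the single tree $\btone$, $D\,\btone = \emptyset$, and $D\,\emptyset = 0$.

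The main obstacle is the book-keeping in this bijection, especially when $r_k$ is a leaf of $t$: one must check that inserting $v$ at $(r_k, L)$ of $t$ and then applying $D$ produces the same tree as inserting $v$ at the new slot $(r_{k-1}, R)$ of $D\,t$. This follows by unwinding the surgery: in the enlarged $y$, the left subtree of $r_k$ is the singleton $\{v\}$, so splicing that subtree into $r_{k-1}$'s right-child position places $v$ exactly at the new slot of $D\,t$. A cleaner alternative is to argue by symmetry: the left-right reflection $\tau$ of binary trees is an involution that is simultaneously an automorphism of the \textpbtqleft\ and a graph isomorphism between the \textpbtpleft\ and the \textpbtpright, so the known duality of $(\textpbtqleft, \textpbtpleft)$ from \cite{graph_duality} transfers by conjugation by $\tau$ to the pair $(\textpbtqleft, \textpbtpright)$.
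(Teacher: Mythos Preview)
Your proposal is correct, and in fact you give two proofs. The second one --- transporting Fomin's duality of the \emph{lattice of binary trees} and the \emph{bracket tree} along the left--right reflection $\tau$ --- is exactly the paper's one-line argument: the paper simply says the result ``follows from the duality of the \emph{lattice of binary trees} and the \emph{bracket tree}'' in \cite{graph_duality}, leaving the reflection implicit.

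Your first argument, the direct verification of $D\,U = U\,D + I$ via the slot bijection, is a genuinely different route that the paper does not pursue. It has the virtue of being self-contained (no appeal to \cite{graph_duality}) and of making the combinatorics of the down-operator explicit; the paper's approach, by contrast, is shorter but defers all the work to Fomin. One small remark on your bijection: the description ``$(r_k,L)\mapsto (r_{k-1},R)$'' tacitly assumes $k\ge 2$; when $k=1$ (the root is the right-most node and a leaf), $D\,t=\emptyset$ and the slot $(r_1,L)$ should be matched with the unique root slot of the empty tree. This is a cosmetic edge case and does not affect the validity of the argument.
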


\begin{proof}Follows from the duality of the \emph{\textpbtqleft}\ and the \emph{\textpbtpleft}.
\end{proof}\

\begin{proposition}\label{proposition::rcor::sylv::right}The following algorithm describes a $1$-correspondence in
the \textpbtqleft\ and the \textpbtpright.
\end{proposition}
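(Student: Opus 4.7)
The plan is to follow exactly the strategy used in the proof of Proposition \ref{proposition::rcor::hypo}: rather than verifying the $1$-correspondence axioms from scratch on binary trees, I would reduce to a $1$-correspondence already available in Fomin's work on the \textpbtqleft\ and the \textpbtpleft\ in their bracketed-expression incarnation, and then transport it through the bijection of Lemma \ref{lemma::tree::to::bracketed::expressions}.

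First, I would invoke Fomin's construction (\cite{schensted_for_dual_graphs}, around Lemma 4.6) that gives a $1$-correspondence between the lattice of non-associative products and the bracket tree, with rules phrased in terms of where the new entry $x_{n+1}$ is placed in a bracketed expression and how the brackets are rearranged. Next, I would pass from the \textpbtpleft\ (where an edge corresponds to \emph{deletion of the first entry}) to the \textpbtpright\ (where an edge corresponds to \emph{deletion of the last entry}). This is achieved by the palindrome symmetry sending $x_1 x_2 \cdots x_{n+1}$ to $x_{n+1} \cdots x_2 x_1$; since this symmetry is an involutive automorphism that identifies the up-edges of the lattice with themselves and the down-edges of the bracket tree with the down-edges of its reflection, any $1$-correspondence on Fomin's pair is transported to one on the reflected pair. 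Finally, I would apply Lemma \ref{lemma::tree::to::bracketed::expressions} to rewrite the rules in terms of binary trees, and check that the resulting case analysis matches, step by step, the algorithm stated in the proposition.

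The main obstacle is the bookkeeping in this last translation. Fomin's rules split into several configurations (the degenerate case $t=x=y$ together with the flag $\alpha$, the case $x=y$ non-degenerate, and the generic case $x\neq y$); after applying the palindrome reflection and then Lemma \ref{lemma::tree::to::bracketed::expressions}, each of these must be matched with a specific local modification on binary trees. The key dictionary to establish is that under the right-to-left infix labelling of Lemma \ref{lemma::tree::to::bracketed::expressions}, removing or adding an entry at the \emph{end} of a bracketed expression corresponds exactly to detaching or attaching a node along the right spine of the associated binary tree. Once this dictionary is made explicit, the two axioms of a $1$-correspondence, namely the commuting-square condition and the degeneracy condition, are inherited directly from Fomin's construction, and no additional combinatorial argument is needed beyond a finite case check.
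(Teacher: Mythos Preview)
Your approach is precisely the one the paper adopts, by analogy with the proof of Proposition~\ref{proposition::rcor::hypo}: in fact the paper does not spell out a separate proof for this proposition, evidently regarding it as the same argument with the bijection of Lemma~\ref{lemma::tree::to::bracketed::expressions} playing the role that Lemma~\ref{lemma::compo::to::words} played before, and with the left/right reflection replacing the $0\leftrightarrow 1$ swap. Your plan to transport Fomin's $1$-correspondence through the palindrome symmetry and then through Lemma~\ref{lemma::tree::to::bracketed::expressions}, followed by a finite case check against the stated algorithm, is exactly the intended route.
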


\begin{algorithm}
\caption{natural $r$-correspondence in the \textpbtqleft\ and the
\textpbtpright.} \label{algo::rcor::qsym::ncsf}
\begin{algorithmic}[1]
\IF{$t = x = y$ and $\alpha = 1$}%
    \STATE $z \Affect t $, with one node added as right child-node of its rightmost node%
\ELSE %
    \IF{$x = y$}%
        \STATE $z \Affect y$, with one node added as left child-node of its rightmost node%
    \ELSE %
            \STATE $z \Affect x $, with one node added in such a way that deleting the right-most one gives back $y$ $^{(1)}$%
    \ENDIF%
\ENDIF
\end{algorithmic}
\end{algorithm}
\vspace{-0.35cm} {\footnotesize $^1$ Due to the duality of the two
graphs, there is one and only one way doing this.}

\begin{remark}This algorithm does not apply to degenerated cases for which $z$ is
trivially deduced from the definition of a $1$-correspondence.
\end{remark}

\subsection{Growth diagram - equivalence of the two
constructions}\label{section::sylv::fomin}\

Let us build the growth diagram $\growthdiagramOfSigma$ of the
permutation $\sigma = 351426$. On the upper boundary of
$\growthdiagramOfSigma$, one gets a saturated chain $\fominQSylv$ in
the \textpbtqleft, and on the right boundary a chain $\fominPSylv$
in the \textpbtpright.

{\small \centerline{
\newdimen\vcadre\vcadre=0.15cm 
\newdimen\hcadre\hcadre=0.15cm 
\setlength\unitlength{1.2mm} $\xymatrix@R=-1mm@C=0mm{
*{\GrTeXBox{\emptyset}}\arx1[dd]\arx1[rr] && *{\GrTeXBox{\btone}}\arx1[dd]\arx1[rr] && *{\GrTeXBox{\btr}}\arx1[dd]\arx1[rr] &&%
*{\GrTeXBox{\btlandr}}\arx1[dd]\arx1[rr] && *{\GrTeXBox{\btlandrl}}\arx1[dd]\arx1[rr] && *{\GrTeXBox{\btlrandrl}}\arx1[dd]\arx1[rr] &&%
*{\GrTeXBox{\btlrandrtolandr}}\arx1[dd]\\
& *{\GrTeXBox{}} && *{\GrTeXBox{}} && *{\GrTeXBox{}} &&%
*{\GrTeXBox{}} && *{\GrTeXBox{}} && *{\GrTeXBox{\permOneSymb}}\\
*{\GrTeXBox{\emptyset}}\arx1[dd]\arx1[rr] && *{\GrTeXBox{\btone}}\arx1[dd]\arx1[rr] && *{\GrTeXBox{\btr}}\arx1[dd]\arx1[rr] &&%
*{\GrTeXBox{\btlandr}}\arx1[dd]\arx1[rr] && *{\GrTeXBox{\btlandrl}}\arx1[dd]\arx1[rr] && *{\GrTeXBox{\btlrandrl}}\arx1[dd]\arx1[rr] &&%
*{\GrTeXBox{\btlrandrl}}\arx1[dd]\\
& *{\GrTeXBox{}} && *{\GrTeXBox{\permOneSymb}}\arx3[rrrrrrrruu] && *{\GrTeXBox{}} &&%
*{\GrTeXBox{}} && *{\GrTeXBox{}} && *{\GrTeXBox{}}\\
*{\GrTeXBox{\emptyset}}\arx1[dd]\arx1[rr] && *{\GrTeXBox{\btone}}\arx1[dd]\arx1[rr] && *{\GrTeXBox{\btone}}\arx1[dd]\arx1[rr] &&%
*{\GrTeXBox{\btl}}\arx1[dd]\arx1[rr] && *{\GrTeXBox{\btlandr}}\arx1[dd]\arx1[rr] && *{\GrTeXBox{\btlrandr}}\arx1[dd]\arx1[rr] &&%
*{\GrTeXBox{\btlrandr}}\arx1[dd]\\
& *{\GrTeXBox{}} && *{\GrTeXBox{}} && *{\GrTeXBox{}} &&%
*{\GrTeXBox{\permOneSymb}}\arx3[lllluu] && *{\GrTeXBox{}} && *{\GrTeXBox{}}\\
*{\GrTeXBox{\emptyset}}\arx1[dd]\arx1[rr] && *{\GrTeXBox{\btone}}\arx1[dd]\arx1[rr] && *{\GrTeXBox{\btone}}\arx1[dd]\arx1[rr] &&%
*{\GrTeXBox{\btl}}\arx1[dd]\arx1[rr] && *{\GrTeXBox{\btl}}\arx1[dd]\arx1[rr] && *{\GrTeXBox{\btlr}}\arx1[dd]\arx1[rr] &&%
*{\GrTeXBox{\btlr}}\arx1[dd]\\
& *{\GrTeXBox{\permOneSymb}}\arx3[uuuurr] && *{\GrTeXBox{}} && *{\GrTeXBox{}} &&%
*{\GrTeXBox{}} && *{\GrTeXBox{}} && *{\GrTeXBox{}}\\
*{\GrTeXBox{\emptyset}}\arx1[dd]\arx1[rr] && *{\GrTeXBox{\emptyset}}\arx1[dd]\arx1[rr] && *{\GrTeXBox{\emptyset}}\arx1[dd]\arx1[rr] &&%
*{\GrTeXBox{\btone}}\arx1[dd]\arx1[rr] && *{\GrTeXBox{\btone}}\arx1[dd]\arx1[rr] && *{\GrTeXBox{\btr}}\arx1[dd]\arx1[rr] &&%
*{\GrTeXBox{\btr}}\arx1[dd]\\
& *{\GrTeXBox{}} && *{\GrTeXBox{}} && *{\GrTeXBox{}} &&%
*{\GrTeXBox{}} && *{\GrTeXBox{\permOneSymb}} && *{\GrTeXBox{}}\\
*{\GrTeXBox{\emptyset}}\arx1[dd]\arx1[rr] && *{\GrTeXBox{\emptyset}}\arx1[dd]\arx1[rr] && *{\GrTeXBox{\emptyset}}\arx1[dd]\arx1[rr] &&%
*{\GrTeXBox{\btone}}\arx1[dd]\arx1[rr] && *{\GrTeXBox{\btone}}\arx1[dd]\arx1[rr] && *{\GrTeXBox{\btone}}\arx1[dd]\arx1[rr] &&%
*{\GrTeXBox{\btone}}\arx1[dd]\\
& *{\GrTeXBox{}} && *{\GrTeXBox{}} && *{\GrTeXBox{\permOneSymb}}\arx3[lllluuuu]\arx3[rrrruu] &&%
*{\GrTeXBox{}} && *{\GrTeXBox{}} && *{\GrTeXBox{}}\\
*{\GrTeXBox{\emptyset}}\arx1[rr] && *{\GrTeXBox{\emptyset}}\arx1[rr] && *{\GrTeXBox{\emptyset}}\arx1[rr] &&%
*{\GrTeXBox{\emptyset}}\arx1[rr] && *{\GrTeXBox{\emptyset}}\arx1[rr] && *{\GrTeXBox{\emptyset}}\arx1[rr] &&%
*{\GrTeXBox{\emptyset}}\\
 }$ } } \vspace{-0.5cm}
\begin{figure}[h]
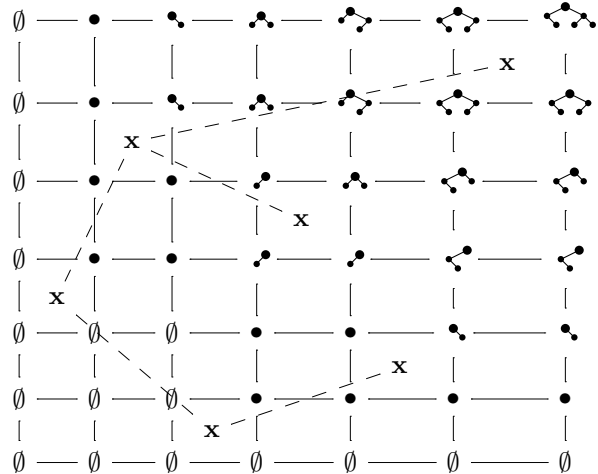

\caption{Example of growth diagram for the BST insertion algorithm.}
\end{figure}

\subsubsection{From a chain of binary trees to an increasing tree}\

Converting the horizontal chain $\fominQSylv$ into an increasing
tree is naturally done by labeling its trees according to the order
in which their appear in the path. \setlength\unitlength{1.4mm}
\def\mybygspace{\qquad}
{\small $$ \fominQSylv \mybygspace = \mybygspace \emptyset
\mybygspace \to \mybygspace \bullet \mybygspace \to \mybygspace \btr
\mybygspace \to \mybygspace \btlandr \mybygspace \to \mybygspace
\btlandrl \mybygspace \to \mybygspace \btlrandrl \mybygspace \to
\mybygspace \btlrandrtolandr$$}
\def\myspace{\,\,\,\,\,}
\def\myrayon{12pt}
{\small $$
\begin{CD}
  \vcenter{\tiny\xymatrix@C=1mm@R=1mm{*=<\myrayon>[o][F-]{{\bf 1}}}}
  \myspace \to \myspace
  \vcenter{\tiny\xymatrix@C=1mm@R=1mm{ *=<\myrayon>[o][F-]{1}\ar@{-}[dr] \\ &
      *=<\myrayon>[o][F-]{{\bf \underline 2}} }}
  \myspace \to \myspace
\vcenter{\tiny\xymatrix@C=1mm@R=1mm{
&  *=<\myrayon>[o][F-]{1}\ar@{-}[dl]\ar@{-}[dr]\\
 *=<\myrayon>[o][F-]{{\bf \underline 3}} &&  *=<\myrayon>[o][F-]{2} }}
  \myspace \to \myspace
  \vcenter{\tiny\xymatrix@C=0mm@R=1mm{
      &    &&  *=<\myrayon>[o][F-]{1}\ar@{-}[dll]\ar@{-}[drr] \\
      & *=<\myrayon>[o][F-]{3} && && *=<\myrayon>[o][F-]{2}\ar@{-}[dl] \\
      &&  && *=<\myrayon>[o][F-]{{\bf \underline 4}}
    }}
  \myspace \to \myspace
  \vcenter{\tiny\xymatrix@C=0mm@R=1mm{
      &    &&  *=<\myrayon>[o][F-]{1}\ar@{-}[dll]\ar@{-}[drr] \\
      & *=<\myrayon>[o][F-]{3}\ar@{-}[dr] && && *=<\myrayon>[o][F-]{2}\ar@{-}[dl] \\
      &&  *=<\myrayon>[o][F-]{{\bf \underline 5}} && *=<\myrayon>[o][F-]{4}
    }}
  \myspace \to \myspace
  \vcenter{\tiny\xymatrix@C=0mm@R=1mm{
      &    &&  *=<\myrayon>[o][F-]{1}\ar@{-}[dll]\ar@{-}[drr] \\
      & *=<\myrayon>[o][F-]{3}\ar@{-}[dr] && && *=<\myrayon>[o][F-]{2}\ar@{-}[dl]\ar@{-}[dr] \\
      &&  *=<\myrayon>[o][F-]{5} && *=<\myrayon>[o][F-]{4} && *=<10pt>[o][F-]{{\bf \underline 6}}
    }} \,\, = \,\, \knuthitree(\sigma)
\end{CD}
$$}\

\subsubsection{From a chain of binary trees to a binary search
tree}\

In the vertical chain $\fominPSylv$, each tree $\fominPSylv_i \neq
\emptyset$ differs from its predecessor $\fominPSylv_{i-1}$ by the
node $\bullet_i$ that should be deleted and replaced by its own left
subtree (in any) in order to obtain $\fominPSylv_{i-1}$. Label this
node $\bullet_i$ with $i$, then the remaining nodes form a tree of
shape $\fominPSylv_{i-1}$ whose nodes labels where set during the
previous step of the conversion.

{\small
$$ \fominPSylv \mybygspace = \mybygspace \emptyset \mybygspace \to
\mybygspace \bullet \mybygspace \to \mybygspace \btr \mybygspace \to
\mybygspace \btlr \mybygspace \to \mybygspace \btlrandr \mybygspace
\to \mybygspace \btlrandrl \mybygspace \to \mybygspace
\btlrandrtolandr$$}
{\small $$
\begin{CD}
  \vcenter{\tiny\xymatrix@C=1mm@R=1mm{*=<\myrayon>[o][F-]{{\bf 1}}}}
  \myspace \to \myspace
  \vcenter{\tiny\xymatrix@C=1mm@R=1mm{ *=<\myrayon>[o][F-]{1}\ar@{-}[dr] \\ &
      *=<\myrayon>[o][F-]{{\bf \underline 2}} }}
  \myspace \to \myspace
\vcenter{\tiny\xymatrix@C=1mm@R=1mm{
&  *=<\myrayon>[o][F-]{{\bf \underline 3}}\ar@{-}[dl]\\
 *=<\myrayon>[o][F-]{1}\ar@{-}[dr]\\
 & *=<\myrayon>[o][F-]{2} }}
  \myspace \to \myspace
  \vcenter{\tiny\xymatrix@C=0mm@R=1mm{
      &    &&  *=<\myrayon>[o][F-]{3}\ar@{-}[dll]\ar@{-}[drr] \\
      & *=<\myrayon>[o][F-]{1}\ar@{-}[dr] && && *=<\myrayon>[o][F-]{{\bf \underline 4}} \\
      &&  *=<\myrayon>[o][F-]{2}
    }}
  \myspace \to \myspace
  \vcenter{\tiny\xymatrix@C=0mm@R=1mm{
      &    &&  *=<\myrayon>[o][F-]{3}\ar@{-}[dll]\ar@{-}[drr] \\
      & *=<\myrayon>[o][F-]{1}\ar@{-}[dr] && && *=<\myrayon>[o][F-]{{\bf \underline 5}}\ar@{-}[dl] \\
      &&  *=<\myrayon>[o][F-]{2} && *=<\myrayon>[o][F-]{4}
    }}
  \myspace \to \myspace
  \vcenter{\tiny\xymatrix@C=0mm@R=1mm{
      &    &&  *=<\myrayon>[o][F-]{3}\ar@{-}[dll]\ar@{-}[drr] \\
      & *=<\myrayon>[o][F-]{1}\ar@{-}[dr] && && *=<\myrayon>[o][F-]{5}\ar@{-}[dl]\ar@{-}[dr] \\
      &&  *=<\myrayon>[o][F-]{2} && *=<\myrayon>[o][F-]{4} && *=<\myrayon>[o][F-]{{\bf \underline 6}}
    }} \,\, = \,\, \knuthbtree(\sigma)
\end{CD}
$$}

\begin{proposition}For any permutation $\sigma$,
$\knuthbtree(\sigma) = \fominPSylv(\sigma)$ and $\knuthitree(\sigma)
= \fominQSylv(\sigma)$.
\end{proposition}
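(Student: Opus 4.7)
The plan is to reproduce verbatim the strategy used for the hypoplactic case. The proposition will follow from a key remark describing the meaning of the intermediate shapes along the boundaries of $\growthdiagramOfSigma$: for any $k \in \{0, 1, \ldots, n\}$, the vertex $\fominQSylv_k$ appearing on the top boundary of $\growthdiagramOfSigma$ (read from left to right) is the underlying shape of $\knuthbtree(\sigma_1 \sigma_2 \cdots \sigma_k)$, and the vertex $\fominPSylv_k$ appearing on the right boundary is the underlying shape of $\knuthbtree(\sigma_{/[1..k]})$, where $\sigma_{/[1..k]}$ is the subword of $\sigma$ consisting of the letters whose values lie in $[1..k]$.

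First I would establish this remark by induction on the squares of the growth diagram, processed along diagonals from the lower-left corner. The inductive step amounts to checking that each branch of Algorithm 2 correctly predicts the shape change produced by a single BST insertion. The first branch, $t=x=y$ with $\alpha=1$, occurs exactly when the unique permutation symbol in the square sits at its upper-right corner; in the associated subword this means the letter being inserted is a new maximum, and BST insertion places it as a right child of the current rightmost node, matching the first clause exactly. The second branch, $x=y\neq t$, corresponds to a column with no symbol at the current row but with one encountered below; duality then forces $z$ to be obtained from $y$ by adjoining a left child to its rightmost node, and one checks that this is precisely the shape change of $\knuthbtree$ in the corresponding situation. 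The third branch, $x \neq y$, is controlled by the duality of the \textpbtqleft\ and the \textpbtpright\ (established in the preceding proposition): it forces $z$ to be the unique vertex completing the square, and the footnote of Algorithm 2 makes this $z$ explicit via the deletion rule of the \textpbtpright, which one checks coincides with the corresponding BST shape change.

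Once the remark is proved, the proposition follows immediately, by the same argument as in the hypoplactic case. The conversion of $\fominQSylv$ into an increasing tree labels, at step $k$, the node appearing in $\fominQSylv_k$ but not in $\fominQSylv_{k-1}$ with the value $k$; by the remark this node occupies the position of $\sigma_k$ inside $\knuthbtree(\sigma_1 \cdots \sigma_k)$, which is precisely the rule defining $\knuthitree(\sigma)$. Similarly, the conversion of $\fominPSylv$ into a binary search tree labels the newly appearing node in $\fominPSylv_k$ with $k$, and the remark ensures that this node occupies the position of the value $k$ inside $\knuthbtree(\sigma)$, so this procedure reproduces $\knuthbtree(\sigma)$ on the nose.

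The main obstacle is the inductive verification of the key remark, and specifically its third branch. Checking it amounts to showing that the unique shape forced by graph duality in a square of $\growthdiagramOfSigma$ agrees with the shape that BST insertion produces at the corresponding step; this is the delicate point where the specific choice of the two dual graphs truly pays off, and it is ultimately guaranteed by the correctness of Algorithm 2 as a $1$-correspondence in the \textpbtqleft\ and the \textpbtpright, as asserted in the preceding proposition.
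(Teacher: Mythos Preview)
Your proposal is correct and follows essentially the same route as the paper: both rest on the key remark that $\fominQSylv_k$ is the shape of $\knuthbtree(\sigma_1\cdots\sigma_k)$ and $\fominPSylv_k$ is the shape of $\knuthbtree(\sigma_{/[1..k]})$, from which the proposition follows at once by the definition of the two conversions. The paper simply asserts this remark, whereas you go further and sketch its inductive justification square by square via the three branches of Algorithm~2; this is a welcome addition, though your description of the second branch (``a column with no symbol at the current row but with one encountered below'') is slightly imprecise, since what is really needed there is $x=y\neq t$, i.e., both the row and the column of the current square have already seen their symbol.
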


\begin{proof}
That building the growth diagram of a permutation $\sigma$ is a
parallel version of the application of the binary search tree
insertion algorithm to $\sigma$ follows from the remark that any
tree $\fominQSylv_k$ appearing in $\fominQSylv$ is the shape of the
binary search tree $\knuthbtree(\sigma_1 \sigma_2 \cdots \sigma_k)$
where $\sigma_1 \sigma_2 \cdots \sigma_k$ is the restriction of
$\sigma$ to its first $k$ letters, and any tree $\fominPSylv_k$
appearing in $\fominPSylv$ is the shape of
$\knuthbtree(\sigma_{/[1..k]})$ where $\sigma_{/[1..k]}$ is the
restriction of $\sigma$ to the interval $[1..k]$.
\end{proof}
\bibliographystyle{amsalpha}

\end{document}